\definecolor{aleacolor}{rgb}{0.16,0.59,0.78}
\renewcommand{\cite}{\citet}
\theoremstyle{plain}
\newtheorem{theorem}{Theorem}[section]                                          
\newtheorem{proposition}[theorem]{Proposition}
\theoremstyle{definition}
\newtheorem{definition}[theorem]{Definition}
\theoremstyle{remark}
\newtheorem{remark}[theorem]{Remark}
\makeatletter \@addtoreset{equation}{section} \makeatother
\newcommand{\R}{\mathbb{R}}
\begin{document}

\title[]{Large Deviation Principle for the Greedy\\ Exploration Algorithm over Erd\"os-R\'enyi Graphs}

\author{P. Bermolen}
\author{V. Goicoechea}
\author{M.Jonckheere}
\author{E.Mordecki}

\address{Instituto de Matem\'atica y Estad\'istica Prof. Rafael Laguardia, \newline
Facultad de Inegenier\'ia, \newline
Universidad de la Rep\'ublica,\newline
Uruguay.}
\email{paola@fing.edu.uy}

\address{Instituto de Matem\'atica y Estad\'istica Prof. Rafael Laguardia, \newline
Facultad de Inegenier\'ia, \newline
Universidad de la Rep\'ublica,\newline
Uruguay.}
\email{vgoicoechea@fing.edu.uy}

\address{Instituto de C\'alculo. Conicet, \newline
Facultad de Ciencias Exactas y Naturales, \newline
Universidad de Buenos Aires,\newline
Argentina.}
\email{mjonckhe@dm.uba.ar}

\address{Centro de Matem\'atica, \newline
Facultad de Ciencias, \newline
Universidad de la Rep\'ublica,\newline
Uruguay.}
\email{emordecki@cmat.edu.uy}


\thanks{Research supported by Stic Amsud Gene.}

\keywords{Large Deviation Principle, Greedy Exploration Algorithms, Erd\"os-R\'enyi Graphs, Hamilton-Jacobi equations, Comparison Principle}

\begin{abstract}
We prove a large deviation principle for a greedy exploration process on an Erd\"os-R\'enyi (ER) graph when the number of nodes goes to infinity. To prove our main result, we use the general strategy to study large deviations of processes proposed by \cite{F&K}, based on the convergence of non-linear semigroups. The rate function can be expressed in a closed-form formula, and associated optimization problems can be solved explicitly, providing the large deviation trajectory. Also, we derive an LDP for the size of the maximum independent set discovered by such an algorithm and analyze the probability that it exceeds known bounds for the maximal independent set.  We also analyze the link between these results and the landscape complexity of the independent set and the exploration dynamic.
\end{abstract}

\maketitle

\section{Introduction} \label{Introduction}
Consider a finite, possibly random, graph $G$ for which $V$ is the set of $N$ nodes or vertices. A typical sequential exploration algorithm, usually referred to as \emph{``greedy algorithm''} \footnote{called greedy although there is no policy to choose the optimal vertex in each step, see for instance the definition of an unweighted greedy algorithm in \cite{Jungnickel}.} works as follows. Initially, all the vertices are declared as \emph{unexplored}. At each step, it selects a vertex and changes its state into \emph{active}. After this, it takes all of its unexplored neighbors and changes their states into \emph{blocked}. The \emph{active} and \emph{blocked} vertices are considered as \emph{explored} and removed from the set of \emph{unexplored} vertices. The algorithm keeps repeating this procedure until step $T_N^*$, at which all vertices are either active or blocked (or equivalently, the set of the \emph{unexplored} vertex is empty).  Observe that at any step $k$, the active vertices form an independent set (i.e. there are no edges between the nodes of this set) and that  $T_N^*$ is the size of the independent set constructed by the algorithm. Let $Z_k^N$  be the number of explored nodes at time $k$, then $Z_{T_N^*}^{N}=N$. 

Our motivation to study such an exploration process on random graphs is twofold.
On the one hand, exploration processes have received a great amount of attention in spatial structures. It has been considered on discrete structures like $\mathbb Z^d$ (see \cite{Ritchie,ferrari-fernandez-garcia-02}) and point processes (see \cite{Penrose, Baccelli-12}). 
In physics and biological sciences, where it is usually referred to as random sequential absorption, it models phenomena of deposition of colloidal particles or proteins on surfaces (see \cite{Evans1993}).  In communication sciences and wireless networks in particular,
it allows to represent the number of connections for CSMA-like algorithms in a given time-slot,
for a given spatial configuration of terminals (see \cite{Kleinrock} for a classical reference on the protocol definition). 

On the other hand, these dynamics are the simplest procedure to construct (maximal) independent sets and have been extensively studied for specific graphs.  Explicit results for the size of these sets have been obtained for regular graphs in \cite{wormaldDF}, exploiting their particular structure; see also \cite{Gamarnik} for graphs with large girths, and \cite{Bermolen2017} for more general configuration models. In this context, the greedy algorithm is the simplest instance of a local algorithm, i.e., an algorithm using only local information available at each vertex and using some randomness. Recently, it was proven in \cite{Gamarnik} that contrary to previously stated conjectures (for instance, in \cite{Hatami}), local algorithms can not discover asymptotically maximum independent sets (independent set of the maximum size) and stay sub-optimal, up to multiplicative constant, for regular graphs with large girth. Hence, it is natural to look at related questions for Erd\"os-R\'enyi (ER) graphs: we focus on giving estimates of reaching a given size of maximum independent sets by studying the large deviations of the exploration process. 

Thanks to the great amount of independence and symmetry of the edges' collection in a sparse ER graph $G(N, c/N)$, the greedy exploration algorithm is characterized by $\left\{Z^N_k\right\}_k$, a simple one-dimensional Markov process. Consequently, a functional law of large numbers described by a differential equation can be employed to get the macroscopic size of the constructed independent set when the number of nodes goes to infinity (see \cite{Bermolen2017bounds} and references in \cite{McDiarmid}). Diffusion approximations for the process and central limit theorem derived from it for the size $T_N^*$ of the associated independent set are also known, see \cite{Bermolen2017bounds}. Moreover, in \cite{Pittel}, exponential bounds are proved for the probability that the stopping times $t_f(G(N, p/N))$ of the $f$-driven algorithms (in particular, $T_N^*$) belong to certain intervals.
However, to the best of our knowledge, there is no characterization of a large deviation principle (LDP) for both the discrete-time Markov process $\left\{Z_k^N\right\}_k$ and the random variable $T_N^*$, which can give various types of useful information both on the greedy exploration and on the independent set landscape. For example, it allows determining the most probable trajectory for which the independent set's size is bigger/samaller than selected bounds. The present paper's topic is a refined analysis of this simple algorithm by studying the large deviations (LD) for the sequence of processes $\left\{Z_k^N\right\}_k$. As a corollary, we obtain an LDP for the size of the independent set constructed by the algorithm.

Although $\left\{Z_k^N\right\}_k$ is a simple Markov process, as far as we know, computing its LDP does not directly follow from classical results. Indeed, the well-known work of \cite{F&W} is not directly applicable to our process since both the drift and the jump measure involved in the underlying stochastic differential equation depend on the scaling parameter. An LD upper bound for a general family of processes, including processes whose (discontinuous) drift and jump measure depends on the scaling, is presented in \cite{Dupuis_discI}. However, the authors do not provide sufficient conditions to ensure that the general upper bound obtained for simpler processes is still valid for this case.

In this article, we use techniques from the theory of viscosity solutions to Hamilton-Jacobi equations and prove that its LD upper bound not only works for a continuous-time version of $\left\{Z_k^N\right\}_k$, but is also effectively the LD rate function. To prove this LDP, we use the general strategy to study of large deviations of processes proposed by \cite{F&K}, which is based on the convergence of non-linear semigroups.

In general, there are at least two approaches in the literature to prove an LDP. The traditional approach to LDP is via the so-called \emph{change of measure method}. Indeed, beginning with the work of \cite{Cramer} and including the fundamental work on large deviations for stochastic processes by \cite{F&W} and \cite{DonVar}, much of the analysis has been based on a change of measure techniques. In this approach, a tilted or reference measure is identified under which the events of interest have a high probability. The probability of the event under the original measure is bounded in terms of the Radon-Nikodym density that relates both measures. In our case, finding a direct change of measure turns out to be a highly non-trivial task due to the transitions rate dependence on the state and the intricate overall dependence on the scaling parameter.

Another approach is analogous to the Prohorov compactness approach to weak convergence of probability measures (by studying these measures' tightness). It is sometimes referred to as the \emph{exponential tightness method}. This has been established by \cite{Puhalskii}, \cite{OV}, \cite{deAcosta}, \cite{Dupuis}, \cite{Fleming}, \cite{Evans_Ishii}, and others.

The remarkable work of \cite{F&K} consists of combining the tools of probability, analysis, and control theory used in the works of \cite{deAcosta}, \cite{Dupuis}, \cite{Evans_Ishii}, \cite{Fleming78}, \cite{Fleming}, \cite{Fleming99}, \cite{Puhalskii}, and others to propose a general strategy for the study of large deviations of processes. In the case of Markov processes, this program is carried out in four steps:
The first step consists of proving the convergence of non-linear generators $H_N$ and derive the limit operator $\mathbf{H}$. The second step consists of verifying the \emph{exponential compact containment condition}. The third step consists of proving that $\mathbf{H}$ generates a semigroup $\mathbf{V}= \left\{V_t\right\}_t$. This issue is nontrivial and follows, for example, by showing that the Hamilton-Jacobi equation $
f(x)-\beta H \left(x,\nabla f(x)\right)-h(x)=0$ has a unique solution $f$ for all $h\in C(E)$ and $\beta>0$ in a viscosity sense when $\mathbf{H}(f)(x)=H\left(x, \nabla f(x)\right)$. The rate function is constructed in terms of that limit $\mathbf{V}$. This limiting semigroup usually admits a variational form known as the \emph{Nisio semigroup} in control theory. Then, the fourth step consists of constructing a variational representation for the rate function. In a nutshell, as a consequence of the first two steps, the process verifies the exponential tightness condition; the third step assures the existence of an LDP, and the fourth step provides a useful variational version of the rate. 

In our case, after working on the four steps that we mentioned before, we deduce not only a variational form of the rate function but also prove that it can be expressed as an action integral of a cost function $L$. Moreover, by solving the associated Hamilton's equations, the optimization of the rate over a set of trajectories can be transformed into a real parametric function optimization. 

Additionally, the cost function $L$  has a simple interpretation in terms of local deviations for the average of Poisson random variables.  As such, this is a first step to understand how such local algorithms behave on complicated landscapes.

This result also allows us to derive quantitative results about the independent set's size constructed by this algorithm. For instance, we can compute the probability that this size is larger than the asymptotic Erd\"os bound for the maximum independent set when $c\geq 3$ and for the maximum independent set's exact value when $c<e$.
In particular, it sheds light on the relation between the complexity of the landscape and the exploration algorithm. It is known (and coined as the $e$-phenomena in \cite{Spitzer, Jonckheere}) that for $G(N, c/N)$ with $c < e$, an improved local algorithm (the degree-greedy algorithm, which is an improvement of the modification of the greedy algorithm presented in the earlier paper of \cite{Karp-Sipser}) is asymptotically optimal. 
The computation of LD estimates for the greedy exploration (using the asymptotic Erd\"os bound) allows us to give evidence of a phase transition for the independent set landscape around $e$ (we lose some precision here because of using a bound instead of the true asymptotic value of the independent set), but it hints at an interesting connection between complexity phase transitions and explicit large deviations results.

\bigskip

The rest of the paper is organized as follows. 
In Section \ref{sec:main_results}, we define our model and present the main result of this article: a path-state LDP for the greedy exploration process. As a corollary, we obtain an LDP for the size of the independent set discovered by the algorithm and analyze its implications. In Section \ref{sec:LDP_proof}, we briefly describe Feng and Kurtz's theory in our context and prove our main theorem.

\section{Main Results}
\label{sec:main_results}
In this section, we define our process and state the main results of the paper. The key steps of the proof of Theorem \ref{LDP_for_Y} are presented in the next section. 

\subsection{Greedy exploration algorithm}
Let $G\left(N, \frac{c}{N}\right)$ be a sparse Erd\"os-R\'enyi graph for which $V$ is the set of $N$ vertices.  At any step $k = 0, 1, 2, \ldots$, we consider that each vertex is either \emph{active}, \emph{blocked}, or \emph{unexplored}. Accordingly, the set of vertices will be split into three components: the set of active vertices $\mathcal{A}_k$, the set of blocked vertices $\mathcal{B}_k$, and the set of unexplored vertices $\mathcal{U}_k$.

The greedy exploration algorithm in discrete time on a graph $G$ can be described as follows. Initially, it sets $\mathcal{U}_0 = V$, $\mathcal{A}_0 = \emptyset$ and $\mathcal{B}_0 = \emptyset$. To explore the graph, at the $(k+1)$-th step it selects uniformly a vertex $i_{k+1}\in\mathcal{U}_k$ and changes its state into active. After this, it takes all of its unexplored neighbors, i.e. the set $\mathcal{N}_{i_{k+1}} = \{ w \in \mathcal{U}_k | i_{k+1} \text{ shares and edge with } w \}$, and changes their states into blocked. This means that the resulting set of vertices will be given by $\mathcal{U}_{k+1} = \mathcal{U}_k \backslash \{ i_{k+1} \cup \mathcal{N}_{i_{k+1}} \}$, $\mathcal{A}_{k+1} = \mathcal{A}_{k} \cup \{ i_{k+1} \}$ and $\mathcal{B}_{k+1} = \mathcal{B}_k \cup \mathcal{N}_{i_{k+1}}$.  The algorithm iterates this procedure until the step $T_N^*$ at which all vertices are either active or blocked (or equivalently $\mathcal{U}_{T_N^*} = \emptyset$).  Observe that at any step $k$, the active vertices form an independent set and that $\mathcal{A}_{T_N^*}$ is a maximal independent set (because each of the vertices in $V\setminus A_{T_N^*}$ is a neighbour of at least one vertice of $A_{T_N^*}$).

Let $Z_k^N = \left|\mathcal{A}_{k}^{N}\cup \mathcal{B}_{k}^{N}\right|$  be the number of explored vertices at step $k$. By construction, 
$
Z_{k+1}^N = Z_k^N + 1 + \zeta_{k+1}^N,
$
where $\zeta_{k+1}^N$ is the number of unexplored neighbors of the selected active vertex at step $k+1$. The distribution of $\zeta_{k+1}^N$ depends only on the number of already explored vertices $Z_k^N$, that is the distribution is Binomial with updated parameter $N-Z_k^N-1$ and the same edge probability $c/N$. The process $\left\{Z_k^N\right\}_k$ is then a discrete time Markov chain with state space $\left\{0,1,2,...,N\right\}$, increasing, time-homogeneous and with an absorbing state $N$. We are interested in $T_N^*\in \left\{0,1,2,...,N\right\}$, the time at which $\left\{Z_k^N\right\}_k$ reaches $N$, since $T_N^*$ coincides with the size of the maximal independent set constructed by this algorithm.

We use the notation in the work of \cite{F&K} for the discrete time Markov processes case. Let $\tilde{Y}^N = \left\{\tilde{Y}^N_k\right\}_{k\geq 0}$ be a scaled version of the described process: $\tilde{Y}^N_k= \frac{Z_k^N}{N}$. The transition operator of the process $\tilde{Y}^N$ for $x\in E^N=\left\{\frac{k}{N}:\, k=0,1,...,N \right\}$ is:
\begin{equation}\label{eq:TN}
T_N\left(f\right)(x):=T_{\tilde{Y}^N}\left(f\right)(x)=\mathbb{E}\left[f\left(x+\frac{1}{N}+\frac{1}{N}\zeta_{N,x}\right)\right],
\end{equation}
where $\zeta_{N,x}$ is the number of unexplored neighbors of the selected active vertex given that there are already $Nx$ explored vertices. Then $\zeta_{N,x}$ has a Binomial distribution with parameters $N-Nx-1$ and $\frac{c}{N}$. We consider the embedding maps $\eta_N: E^N\rightarrow E$, where $E=[0,1]$. Define the following continuous process:
\begin{gather}
Y^N_t=\tilde{Y}^N_{\left[ Nt \right]}= \frac{Z_{\left[Nt\right]}}{N} \text{ if } t \in \left[0,1\right].
\label{eq:Zcont}
\end{gather}
This process is a semimartingale; moreover, it can be decomposed as
\begin{gather*}
Y_t^N = \int_{0}^{t}\left[1+c\left(1-Y_s^N-\frac{1}{N}\right)\right]\text{d}s+\frac{M_{tN}^N}{N}, 
\end{gather*}
where $\left\{M_t^N\right\}_t$ is a $\mathbb{F}^N=\left\{ \mathcal{F}_t^N\right\}_t$ martingale with $\mathcal{F}_t^N = \sigma\left(Z_{\left[Ns\right]}^N: \, 0\leq s\leq t\right)$.

In \cite{Bermolen2017bounds} it is proved that the sequence of processes $\left\{Y^N\right\}_N$, contained in the space of c\`adl\`ag functions $D_E \left[0,1\right]$, converges in the Skorohod topology to $\left\{z(t) \wedge 1\right\}_{0\leq t\leq 1}$, where $z$ is the solution of the ODE:
\begin{equation}
\dot{z}=1+c\left(1-z\right); \quad z(0)=0.
\label{eq:fluid-limit}
\end{equation}
This equation has an explicit solution given by  $z(t)=\frac{1+c}{c}\left(1-e^{-ct}\right)$. Moreover, a law of large numbers can be deduced for the proportion of vertices that form the independent set constructed by the algorithm. In particular, it is proved that  $\frac{T_N^*}{N}$ converges in probability to $T^*$ defined by $z\left(T^*\right)=1$, i.e. $T^*=\frac{1}{c}\log\left(1+c\right)$.

In the same paper \citep{Bermolen2017bounds} and for a different scaling of the process, a diffusion result is also proved from which a central limit theorem for $\frac{T_N^*}{N}$ is deduced: $\sqrt{N}\left(\frac{T_N^*}{N}-T^*\right)$ converges in distribution to a centered normal random variable with variance $\sigma^2= \frac{c}{2\left(c+1\right)^2}$. Now in the present document, we study an LDP for both the sequence of processes $\left\{Y^N\right\}_N$ and for $\left\{\frac{T_N^*}{N}\right\}_N$. It is known that the results of the central limit theorems and large deviations types are independent of each other, and neither is stronger than the other.  However, we will see that an LDP also automatically provides results of the law of large numbers type.

\subsection{Large Deviation Principle}
This paper aims to present a more refined analysis of the simple exploration algorithm presented in the previous section. As a corollary, in the next section, we deduce an LDP for the sequence of random variables $\left\{ \frac{T_N^*}{N} \right\}_N$.

\begin{theorem}[LDP for $\left\{Y^N\right\}_N$] \label{LDP_for_Y} 
The sequence $\left\{ Y^N \right\}_N$  with $Y^N = \left\{Y^N_t\right\}_{0\leq t \leq 1}$, where $Y^N_t=\frac{Z^N_{\left[Nt\right]}}{N}$, verifies an LDP on $D_E\left[0,1\right]$ with good rate function $I: D_E [0,1] \rightarrow \left[0, +\infty\right]$ such that:
\begin{equation}
I(\varphi) = \begin{cases} 
\intop_{0}^{1} L \left( \varphi, \dot{\varphi} \right) \text{d}t & \text{ if } \varphi \in \mathcal{H}_L,\\
+ \infty & \text{ in other case,}
\end{cases}
\label{eq:rateLDP_Y}
\end{equation}
where $E=\left[0,1\right]$, $L:E\times \mathbb{R}\rightarrow \mathbb{R}$ is the cost function given by
\begin{equation}\label{eq_Lcost_Y}
L(x, \beta)= 
\begin{cases}
(\beta-1) \left[ \log\left( \frac{\beta -1}{c(1-x)}\right) -1 \right] + c(1-x), & \text{ if } x<1 \text{ and } \beta>1,\\
c(1-x), 	& \text{ if } x<1  \text{ and } \beta=1,\\
0, & \text{ if } x=1 \text{ and } \beta=0, \\
+\infty & \text{ in other cases },
\end{cases}
\end{equation}
and $\mathcal{H}_L$ is the set of all absolutely continuous\footnote{A function $\varphi:[a,b]\rightarrow \mathbb{R}$ is absolutely continuous if it can be written as an integral function; i.e. there exists a Lebesgue integrable function $\psi$ on $[a,b]$ such that $\varphi(x)=\varphi(a)+\intop_{a}^{x}\psi(t)dt$ for all $x\in [a,b]$.} function $\varphi:[0,1]\rightarrow [0,1]$ with value $0$ at $0$ and such that the integral $\int_0^1 L\left(\varphi(t), \dot{\varphi}(t)\right)dt$ exists and it is finite.
\end{theorem}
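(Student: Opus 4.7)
The plan is to follow the four-step Feng--Kurtz program outlined in the introduction, specialized to the discrete-time Markov chain $\tilde Y^N$.

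\textbf{Steps 1 and 2 (generator convergence and containment).} For $f \in C^1(E)$ I would set $H_N(f)(x) = N^{-1}\log T_N(e^{Nf})(x)$ with $T_N$ as in (\ref{eq:TN}). Taylor-expanding $f$ and using the explicit moment generating function of the Binomial$(N - Nx - 1, c/N)$ variable $\zeta_{N,x}$ gives
$$H_N(f)(x) \longrightarrow H(x, f'(x)) := f'(x) + c(1-x)\bigl(e^{f'(x)} - 1\bigr),$$
uniformly on $E$; the limit operator is $\mathbf{H}(f)(x) = H(x, f'(x))$. Because $E = [0,1]$ is compact, the exponential compact containment condition is automatic, so the first two steps of the program are essentially a routine computation.

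\textbf{Step 3 (Hamilton--Jacobi and semigroup).} The most delicate step is to show that for every $\beta > 0$ and every $h \in C(E)$ the resolvent equation $f(x) - \beta H(x, f'(x)) = h(x)$ admits a unique bounded continuous viscosity solution on $[0,1]$. This requires a comparison principle for a Hamiltonian which is convex in $p$ but loses coercivity at the absorbing boundary $x = 1$, and whose state space has a one-sided constraint at $x = 0$ (trajectories never leave $E$ on the left, since the chain is strictly increasing). I expect this to be the main obstacle: standard doubling-of-variables arguments must be adapted both to the degeneracy $c(1-x) \to 0$ as $x \to 1$ and to the lateral boundary behaviour. Once uniqueness is established, $\mathbf{H}$ generates a Nisio-type semigroup $\mathbf{V}$ on $C(E)$, and the Feng--Kurtz machinery delivers the LDP with good rate function given by the variational formula associated with $\mathbf{V}$.

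\textbf{Step 4 (action-integral representation).} Since $H(x, \cdot)$ is convex, the Nisio rate function collapses to the action integral $I(\varphi) = \int_0^1 L(\varphi, \dot\varphi)\, dt$, where $L$ is the Legendre transform in the momentum variable:
$$L(x, \beta) = \sup_{p \in \R}\bigl\{(\beta - 1)p - c(1-x)(e^p - 1)\bigr\}.$$
For $x < 1$ and $\beta > 1$, differentiation yields the unique critical point $p^* = \log\bigl((\beta-1)/(c(1-x))\bigr)$, and substitution recovers the expression $(\beta - 1)\bigl[\log((\beta-1)/(c(1-x))) - 1\bigr] + c(1-x)$. For $\beta = 1$ the supremum is attained as $p \to -\infty$ and equals $c(1-x)$; for $\beta < 1$ the supremum is $+\infty$, consistent with the fact that the underlying chain increases by at least $1/N$ per step, so any admissible limit trajectory must satisfy $\dot\varphi \geq 1$ a.e.\ on $\{\varphi < 1\}$. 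Finally, the absorbing state at $x = 1$ forces $\dot\varphi = 0$ with zero cost, giving the piecewise definition of $L$, the initial condition $\varphi(0) = 0$, and the domain $\mathcal{H}_L$.
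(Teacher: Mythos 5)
Your outline follows exactly the route the paper takes (the four-step Feng--Kurtz program), and the routine parts are done correctly: the limit of $H_N$, the triviality of exponential compact containment on the compact state space $E=[0,1]$, and the Legendre-transform computation reproducing the piecewise formula for $L$, including $\beta=1$, $\beta<1$ and the zero-cost absorbing behaviour. Two points, however, keep this from being a proof rather than a plan. First, and mainly, Step 3 --- the comparison principle for $f-\beta H(x,f')-h=0$ --- is only announced (``I expect this to be the main obstacle \dots Once uniqueness is established \dots''), whereas this is precisely where the actual content of the paper's argument lies: a doubling-of-variables argument with the quadratic penalization $\psi(x,y)=\tfrac12(x-y)^2$, in which one shows $\liminf_{\alpha}\bigl[H(x_\alpha,\alpha\psi_x(x_\alpha,y_\alpha))-H(y_\alpha,\alpha\psi_x(x_\alpha,y_\alpha))\bigr]\le 0$ by proving $\sup_\alpha \alpha(x_\alpha-y_\alpha)<\infty$ (via the a priori bounds of Kraaij and Crandall--Ishii--Lions) and extracting a convergent subsequence, with a separate treatment of the case where the doubling point converges to the absorbing state $z=1$. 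Without carrying out (or at least correctly reducing to citable results) this step, the existence of the limiting semigroup, and hence the LDP itself, is not established.

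Second, a related inaccuracy: you claim $H_N(f)(x)\to f'(x)+c(1-x)\bigl(e^{f'(x)}-1\bigr)$ uniformly on $E$, but at the absorbed state $x=1$ one has $T_N f(1)=f(1)$, so $H_N(f)(1)=0$ for every $N$ and the correct limit is the \emph{discontinuous} Hamiltonian $H(x,\alpha)=\alpha+c(1-x)(e^\alpha-1)$ for $x<1$ and $H(1,\alpha)=0$, as in \eqref{eq:H}. This is not cosmetic: the discontinuity at $x=1$ is exactly what produces the special case $L(1,0)=0$, $L(1,\beta)=+\infty$ for $\beta\neq 0$ that you invoke in Step 4, and it is the source of the boundary difficulty you must handle in the comparison principle. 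Finally, in Step 4 the passage from the Feng--Kurtz variational (Nisio) representation to the action integral is not automatic from convexity alone; one must verify Conditions {\bf 8.9}, {\bf 8.10} and {\bf 8.11} of Feng and Kurtz, i.e.\ exhibit admissible relaxed controls --- in particular a zero-cost pair built from the fluid limit and, for each $f\in C^1(E)$, a solution of $\dot{\mathbf{x}}=1+c(1-\mathbf{x})e^{f'(\mathbf{x})}$ stopped at $1$ --- before Jensen's inequality collapses the infimum over controls to $\int_0^1 L(\varphi,\dot\varphi)\,dt$ on $\mathcal{H}_L$. These verifications are straightforward but they are part of the proof and are missing from your sketch. (Your remark about a one-sided constraint at $x=0$ is not needed: no boundary condition is imposed there in the paper's argument.)
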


The proof is deferred to Section \ref{sec:LDP_proof}.
\begin{remark}[Law of large numbers]
The cost function \eqref{eq_Lcost_Y} is the Legendre transform w.r.t the second variable of the function 
$H\colon E\times \R \rightarrow \R$
given by
\begin{equation}
H\left(x, \alpha\right)= 
\begin{cases} 
\alpha + c\left(1-x\right)\left(e^{\alpha}-1\right), &\text{ if } 0\leq x< 1, \\ 
0, & \text{ if } x=1, 
\end{cases}
\label{eq:H}
\end{equation}
that is $L \left(x, \beta \right) = \underset{\alpha\in\R}{\sup} \{ \alpha \beta - H\left(x, \alpha \right)\}$.
Since $H\left(x, \alpha\right)$ is convex with respect to $\alpha$, the function $L$ is also convex with respect to $\beta$ and verifies $H \left(x, \alpha \right) = \underset{\beta\in\R}{\sup} \{ \alpha \beta - L\left(x, \beta \right)\}$. We use the notation $H \leftrightarrow L$ for short.
As $L\left(x, \beta \right)=0$ if and only if $\beta = H_{\alpha} \left(x,0\right)$, where $H_{\alpha}\left(x, \alpha\right)$ is the partial derivative of $H\left(x, \alpha\right)$ w.r.t. $\alpha$, the trajectories with zero cost are the ones that verify 
$\dot{\varphi}=H_{\alpha}\left(\varphi,0\right) = 1+c(1-\varphi(t))$. For the initial condition $\varphi(0)=0$, as expected, the unique trajectory that has zero cost is the fluid limit $z$ given by Equation \eqref{eq:fluid-limit} i.e. $I(z)=0$ and $I(\varphi)>0$ for all $\varphi \neq z$.
\end{remark}

The following proposition gives an intuitive interpretation of the cost function $L(x, \beta)$ in terms of the rate function for the average of independent Poisson random variables. 

\begin{proposition}
For $x<1$ and $\beta > 1$, it is verified that
$
L\left(x, \beta\right)= \Lambda_{c(1-x)}^*\left(\beta-1\right),
$
where $\Lambda_{\lambda}^*(u)$ is the LD rate function for the average of independent Poisson random variables with parameter $\lambda$.
\end{proposition}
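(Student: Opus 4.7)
The plan is to leverage the Legendre duality already recorded in the remark. From equation~\eqref{eq:H}, for $x<1$ we have
\[
H(x,\alpha) \;=\; \alpha + c(1-x)\bigl(e^{\alpha}-1\bigr).
\]
The key observation is that the second summand $c(1-x)(e^{\alpha}-1)$ is exactly the cumulant generating function $\Lambda_{c(1-x)}(\alpha) = \log \mathbb{E}[e^{\alpha X}]$ of a Poisson random variable $X$ with parameter $c(1-x)$. In other words, $H(x,\alpha) = \alpha + \Lambda_{c(1-x)}(\alpha)$.

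The first step is then to rewrite the Legendre transform defining $L$ as
\[
L(x,\beta) \;=\; \sup_{\alpha\in\mathbb{R}}\bigl\{ \alpha\beta - H(x,\alpha)\bigr\} \;=\; \sup_{\alpha\in\mathbb{R}}\bigl\{ \alpha(\beta-1) - \Lambda_{c(1-x)}(\alpha)\bigr\} \;=\; \Lambda_{c(1-x)}^*(\beta-1),
\]
where the last equality is the definition of the Legendre transform. Since, by Cram\'er's theorem, $\Lambda_\lambda^*$ is precisely the rate function governing the LDP for the empirical mean of i.i.d.\ Poisson$(\lambda)$ variables, this yields the claim.

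The second step, which is optional but informative, is to verify the identity by direct computation. Solving $\partial_\alpha(\alpha(\beta-1) - \lambda(e^\alpha - 1)) = 0$ with $\lambda = c(1-x)$ gives the optimizer $\alpha^* = \log\bigl(\tfrac{\beta-1}{\lambda}\bigr)$, valid for $\beta>1$. Substituting back yields
\[
\Lambda_\lambda^*(\beta-1) \;=\; (\beta-1)\log\!\Bigl(\tfrac{\beta-1}{\lambda}\Bigr) - (\beta-1) + \lambda,
\]
which coincides termwise with the expression for $L(x,\beta)$ in \eqref{eq_Lcost_Y} on the domain $\{x<1,\ \beta>1\}$.

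There is no serious obstacle: the proof is essentially the recognition that the nonlinear generator $H$ differs from a Poisson log-MGF only by the linear term $\alpha$, whose Legendre transform is a translation by~$1$ in the dual variable. The only point requiring a line of justification is that the supremum is attained in the interior (so that the formal derivative calculation is valid), which follows from strict convexity of $\alpha\mapsto\Lambda_{c(1-x)}(\alpha)$ together with $\beta-1>0$ forcing $\alpha^*\in\mathbb{R}$.
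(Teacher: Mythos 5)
Your proposal is correct, and it reaches the identity by a slightly different route than the paper. The paper's proof is a pure formula match: it quotes from Cram\'er's theorem the known closed form $\Lambda_{\lambda}^*(u)=u\left(\log\left(\frac{u}{\lambda}\right)-1\right)+\lambda$ and simply observes that the expression for $L$ in \eqref{eq_Lcost_Y} is this formula with $\lambda=c(1-x)$ and $u=\beta-1$. You instead work at the level of the dual variable: you recognize $H(x,\alpha)=\alpha+\Lambda_{c(1-x)}(\alpha)$ with $\Lambda_{\lambda}(\alpha)=\lambda(e^{\alpha}-1)$ the Poisson log-moment generating function, so that the Legendre transform of $H$ in $\alpha$ is the Poisson Cram\'er transform translated by $1$ in $\beta$. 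This is more conceptual (it explains why the shift by $1$ appears: the deterministic $+1$ in each exploration step contributes the linear term $\alpha$), whereas the paper's argument is shorter but purely computational. One caveat: your first step takes for granted that the explicit $L$ of \eqref{eq_Lcost_Y} equals $\sup_{\alpha\in\mathbb{R}}\{\alpha\beta-H(x,\alpha)\}$, which in the paper is only asserted in a remark, not proved; however, your ``optional'' second step --- solving for the interior maximizer $\alpha^*=\log\bigl(\tfrac{\beta-1}{c(1-x)}\bigr)$ and substituting back --- closes exactly that gap and makes the argument self-contained, so you should regard it as a necessary part of the proof rather than optional. With that step included, your justification that the supremum is attained in the interior (strict convexity of the log-MGF plus $\beta-1>0$) is all that is needed, and the proof is complete.
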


\begin{proof}
The rate function given by Cr\'amer's theorem for the average of independent random variables Poisson with parameter $\lambda$  is 
$
\Lambda_{\lambda}^*(u)= u \left( \log\left(\frac{u}{\lambda}\right)-1\right)+\lambda
$  
(see \cite{Dembo} for example). To complete the proof it is enough to observe that $L(x, \beta)$ coincides with $\Lambda_{\lambda}^*(u)$ when $\lambda=c(1-x)$ and $u=\beta-1$. 
\end{proof}

The previous result can be explained using the following heuristics (which, of course, are far from a proof but give some intuition):
\begin{itemize}
\item
The graph's sparsity implies that the graph is locally tree-like and that the exploration does not see neighbors of a given vertex being neighbors between them. 

\item
The asymptotic distribution of the number of unexplored neighbors of the selected active vertex is Poisson with a time-varying mean.
In other words, the exploration does not change the Poisson nature of the degree distribution, which can be explained by the fact that the biased size distribution of Poisson distribution is again Poisson.
\end{itemize}

More precisely, the cost of a given curve $x(t)$ such that $x\in \mathcal{H}_L$ with $\dot{x}(t)>1$ for all $t\in [0,1]$ is given by $L\left(x(t), \dot{x}(t)\right)=\Lambda_{\lambda(t)}^*\left(\dot{x}(t)-1\right)$, with $\lambda(t)=c\left(1-x(t)\right)$. For a fixed $t\in(0,1)$, the curve $x(t)$ represents the macroscopic proportion of explored vertices at time $t$. Then, the infinitesimal increment $\dot{x}(t) \approx \frac{x(t+h)-x(t)}{h}$ corresponds to the mean number of new explored nodes in one step (the new active node and its unexplored blocked neighbors), that is:
\begin{gather*}
\frac{Y^N_{t+h}-Y^N_t}{h} 
= \frac{1}{Nh} \sum_{k=[Nt]+1}^{[Nt+Nh]} \left(1+\zeta_k^N\right) \approx 1+ \frac{1}{Nh} \sum_{k=[Nt]+1}^{[Nt+Nh]} \zeta_k^N,
\end{gather*}
where $\zeta_k^N$ has a Binomial distribution with parameters $N- Z_k -1$ and $\frac{c}{N}$. For large values of $N$ and $k\in [[Nt]+1,[Nt+Nh]]$, if $\frac{Z_k}{N}$ is close to $x(t)$, then $\zeta_k^N$ can be approximated by a Poisson random variable with parameter  $(N-Z_k-1)\frac{c}{N} \approx c(1-x(t))$. Observe that, in particular, the mean macroscopic behavior $z(t)$ should verify $\dot{z}(t)= 1+c(1-z(t))$, which is the fluid limit we have already seen. Moreover, the global cost of a deviation from a trajectory $x(t)$ can be interpreted as a consequence of the accumulated cost of microscopic deviations of the average of Poisson random variables of parameter $c(1-x(t))$.

\subsubsection{Rare event probability estimation.}
We now use the previous theorem to estimate probabilities of rare events related to $\left\{Y^N\right\}_N$. In the next section, we apply these results to derive an LDP for the size of the independent set constructed by the algorithm. 

As a consequence of Theorem \ref{LDP_for_Y}, if $A\subset D_E[0,1]$ is a \emph{good set} for $I$ (or an $I$-\emph{continuous set}, see \citep{Dembo}), then
$
\underset{N}{\lim}\frac{1}{N} \log \mathbb{P}\left(Y^N\in A\right)=-\underset{\varphi \in A}{\inf}I(\varphi)
$
. The next proposition will facilitate the computation of this infimum for the sets $A$ of interest.

\begin{proposition}[Rate function optimization]\label{prop:rate_function_optimization} 
\begin{enumerate}
\item The optimization problem for the rate over a set of trajectories $A \subset D_E[0,1]$ can be reduced  to a one-dimensional optimization problem:
$
\underset{\varphi \in A}{\inf} I(\varphi) = \underset{\{\alpha_0\in\R:\, \hat{x}_{\alpha_0}\in \bar{A}\}}{\inf} F\left(\alpha_0\right),
$
where the closure of $A$ is considered with respect to the Skorohod topology, 
\begin{equation}\label{eq:F_alpha_0}
F\left(\alpha_0\right) =\int_{0}^{T_{\alpha_0}} L\left( x_{\alpha_0}(t), \dot{x}_{\alpha_0}(t)\right)\text{d}t,
\end{equation}
$x_{\alpha_0}$ is the solution of the ODE:
\begin{equation}\label{eq:Ham_disc}
\begin{cases}
\dot{x}=1+c(1-x)e^{\alpha},& x(0)=0,\\
\dot{\alpha}= c(e^{\alpha}-1),& \alpha(0)=\alpha_0,
\end{cases}
\end{equation}
$T_{\alpha_0}=\inf\{t\in [0,1]: \, x_{\alpha_0}(t) \geq 1 \}$ and $\hat{x}_{\alpha_0}(t)= x_{\alpha_0}(t) \wedge 1$.\\

\item The explicit solution of Equation \eqref{eq:Ham_disc} is the fluid limit \eqref{eq:fluid-limit} when $\alpha_0=0$. For $\alpha\neq 0$ it is given by:
\begin{equation} \label{eq:sol_Ham}
x_{\alpha_0} (t) = \left[ \frac{1}{ck_0} \log \left(\frac{1-k_0}{1-k_0 e^{ct}} \right) + \frac{1}{e^{-ct}-k_0} -\frac{1}{1-k_0}\right] \left(e^{-ct}-k_0 \right),
\end{equation}
where $k_0 = 1-e^{-\alpha_0}$. In this case, $F\left(\alpha_0\right)$ can be written as 
$
F\left(\alpha_0\right) = \int_{0}^{T_{\alpha_0}} c \left(1-x_{\alpha_0}(t)\right) \left[e^{\alpha(t)}\left(\alpha(t)-1\right) + 1 \right]\text{d}t,
$
where $\alpha(t)= -\log \left(1-k_0e^{ct}\right)$.
\end{enumerate}
\end{proposition}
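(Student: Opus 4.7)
The plan is to treat the statement as a classical problem in the calculus of variations for the action functional $I(\varphi)=\int_0^1 L(\varphi,\dot\varphi)\,dt$, where $L$ in \eqref{eq_Lcost_Y} is the Legendre transform of the autonomous Hamiltonian $H$ in \eqref{eq:H}. The first step is to write down Hamilton's canonical equations associated with $H$,
\[
\dot x = H_\alpha(x,\alpha) = 1+c(1-x)e^{\alpha},\qquad \dot\alpha = -H_x(x,\alpha) = c(e^{\alpha}-1),
\]
which are exactly the system \eqref{eq:Ham_disc}. The initial position $x(0)=0$ is forced by $\varphi(0)=0$, so the one-parameter family of candidate extremals is indexed by the free initial covariable $\alpha_0=\alpha(0)$. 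Classical Euler--Lagrange theory---or equivalently Pontryagin's principle, since $H$ is strictly convex in $\alpha$---ensures that any finite-cost absolutely continuous minimizer of $I$ over a fixed-endpoint sub-problem coincides on its interior with a solution of this canonical system.

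The second step is to handle absorption at the boundary $x=1$. From \eqref{eq_Lcost_Y} one reads $L(1,0)=0$, $L(x,0)=+\infty$ for $x<1$, and $L(1,\beta)=+\infty$ for $\beta\neq 0$. Hence any $\varphi\in\mathcal{H}_L$ with finite action must first reach $1$ (with $\dot\varphi\geq 1$ a.e.\ while $\varphi<1$) and then stay constant at $1$ at no extra cost. Writing $T_\varphi=\inf\{t:\varphi(t)=1\}\wedge 1$, the action $I(\varphi)$ depends only on the restriction of $\varphi$ to $[0,T_\varphi]$. This is why the minimization reduces to the clipped trajectories $\hat x_{\alpha_0}=x_{\alpha_0}\wedge 1$: extending an extremal by the constant value $1$ on $[T_{\alpha_0},1]$ leaves $F(\alpha_0)$ unchanged. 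Lower semicontinuity of the good rate function $I$, combined with the continuity of $\alpha_0\mapsto \hat x_{\alpha_0}$ in the Skorokhod topology, justifies replacing $A$ by $\bar A$ in the constraint set.

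The final step for Part 1 is to evaluate $L$ along an extremal. Using $\dot x=H_\alpha(x,\alpha)$, a direct algebraic computation yields
\[
L(x,\dot x) = \alpha\,H_\alpha(x,\alpha) - H(x,\alpha) = c(1-x)\bigl[e^{\alpha}(\alpha-1)+1\bigr],
\]
which is exactly the integrand in the closed form of $F(\alpha_0)$ stated in Part 2.

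For Part 2, the $\alpha$-equation in \eqref{eq:Ham_disc} is autonomous and separable. The change of variable $u=1-e^{-\alpha}$ linearises it to $\dot u = cu$, giving $u(t)=k_0 e^{ct}$ with $k_0=1-e^{-\alpha_0}$ and hence $\alpha(t)=-\log(1-k_0 e^{ct})$. Substituting $e^{\alpha(t)}=1/(1-k_0 e^{ct})$ into the $x$-equation produces a linear first-order ODE, whose integrating factor is obtained via the partial fraction identity $1/(w(1-w))=1/w+1/(1-w)$ applied with $w=k_0 e^{cs}$; imposing $x(0)=0$ then produces formula \eqref{eq:sol_Ham}. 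The main obstacle is in Part 1: rigorously showing that the one-parameter family $\{\hat x_{\alpha_0}\}_{\alpha_0\in\mathbb{R}}$ exhausts all admissible extremals compatible with $\varphi(0)=0$, and justifying the passage from $A$ to $\bar A$ via lower semicontinuity together with the absorbing behaviour of $L$ at $x=1$. The ODE computations of Part 2 are routine in comparison.
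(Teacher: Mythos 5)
Your proposal is correct and follows essentially the same route as the paper: reduce the variational problem to Hamilton's canonical equations \eqref{eq:Ham_disc} with free initial covariable $\alpha_0$, clip the extremals at $x=1$ using the absorbing structure of $L$, evaluate the cost along extremals via $L(x,\dot x)=\alpha H_\alpha(x,\alpha)-H(x,\alpha)=c(1-x)\left[e^{\alpha}(\alpha-1)+1\right]$, and solve the autonomous $\alpha$-equation followed by the resulting linear $x$-equation to obtain \eqref{eq:sol_Ham}. The points you flag as remaining obstacles (exhaustiveness of the extremal family and the passage to $\bar A$) are treated at the same informal level in the paper itself, so your argument matches it in both substance and rigor.
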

\bigskip
Then, in other words, Theorem \ref{LDP_for_Y} and the previous proposition ensure that, given that the process $Y^N \in A$, one might expect that
$
\underset{t\in [0,1]}{\sup}\left\vert Y^N_t - \hat{x}_{\alpha_0^*}(t)\right\vert  \approx 0
$ 
for some $\alpha_0^*$ such that $\hat{x}_{\alpha_0^*} \in \bar{A}$.  

\begin{proof} 
To prove the first statement, note that if $\varphi \in \mathcal{H}_L$ is such that $\varphi(t)=1$ for all $t\geq t_0$, then $I(\varphi)=\intop_{0}^{1}L(\varphi, \dot{\varphi})\text{d}t= \int_{0}^{t_0}L(\varphi, \dot{\varphi})\text{d}t$, so just consider the Euler-Lagrange (EL) equation \eqref{eq:EL} for $x<1$ and $\beta>1$. Equation  \eqref{eq:EL} gives conditions for a function $\varphi$ to be a \emph{stationary curve} of the functional $I$:
\begin{equation} \label{eq:EL}
L_x(\varphi, \dot{\varphi})-\frac{\text{d}}{\text{d}t}L_{\beta}(\varphi, \dot{\varphi})=0 \qquad \text{(Euler-Lagrange),}
\end{equation}
where $L_x$ and $L_{\beta}$ are the partial derivatives of $L$ w.r.t. $x$ and $\beta$ respectively. In this case, the path $\{x(t)\}_t$ is a stationary curve of $I$ if it satisfies the following ODE:
\begin{equation}\label{eq:EL_disc}
\begin{cases}
(x-1)\ddot{x}+\left(cx-(1+c)\right)\dot{x}-cx+(1+c)=0,\\
x(0)=0,\\
\dot{x}(0)=v_0.
\end{cases}
\end{equation}
To solve \eqref{eq:EL_disc}, we consider Hamilton's equations, which are equivalent to EL (see \cite{Arnold}, for example):
\begin{equation}\label{eq:Ham}
\begin{cases}
\dot{x}= H_{\alpha}(x, \alpha),\\
\dot{\alpha}=-H_x(x, \alpha),
\end{cases}
\qquad \text{(Hamilton),}
\end{equation}
where $\alpha$ is an auxiliary function. $H_x$ and $H_{\alpha}$ are the partial derivatives of $H$ w.r.t. $x$ and $\alpha$. In our case these equations give \eqref{eq:Ham_disc}. 
We are interested in solutions $x_{\alpha_0}$ of \eqref{eq:Ham_disc} up to the time they reach the value $1$, then we take $\hat{x}_{\alpha_0}$ as in the proposition and get
$
\underset{\varphi \in A}{\inf} I\left(\varphi\right) = 
\underset{\left\{\alpha_0: \, \hat{x}_{\alpha_0} \in \bar{A} \right\}}{\inf} I\left(\hat{x}_{\alpha_0}\right).
$ 

The uniqueness of the solution of the ODE in \eqref{eq:EL_disc}, ensures that a monotony property with respect to the initial condition $\alpha_0$ holds. 
This implies that $x_{\alpha_0}(t)>t$ for all $t$ if $\alpha_0 > -\infty$, then $T_{\alpha_0} = \inf\left\{ t\in [0,1]: \, x_{\alpha_0}(t) \geq 1\right\} \leq 1$ and $I\left(\hat{x}_{\alpha_0}\right) = F(\alpha_0)$ with $F(\alpha_0)$ defined in \eqref{eq:F_alpha_0}. 
Figure \ref{Fig1} contains the graph of $\hat{x}_{\alpha_0}$ for same value of $\alpha_0 <0$ and $\alpha_0 > 0$ compared with the fluid limit $z \wedge 1$. 
\begin{figure} \label{Fig1}
\begin{center}
\begin{tabular}{c c} 
\includegraphics[scale=0.4]{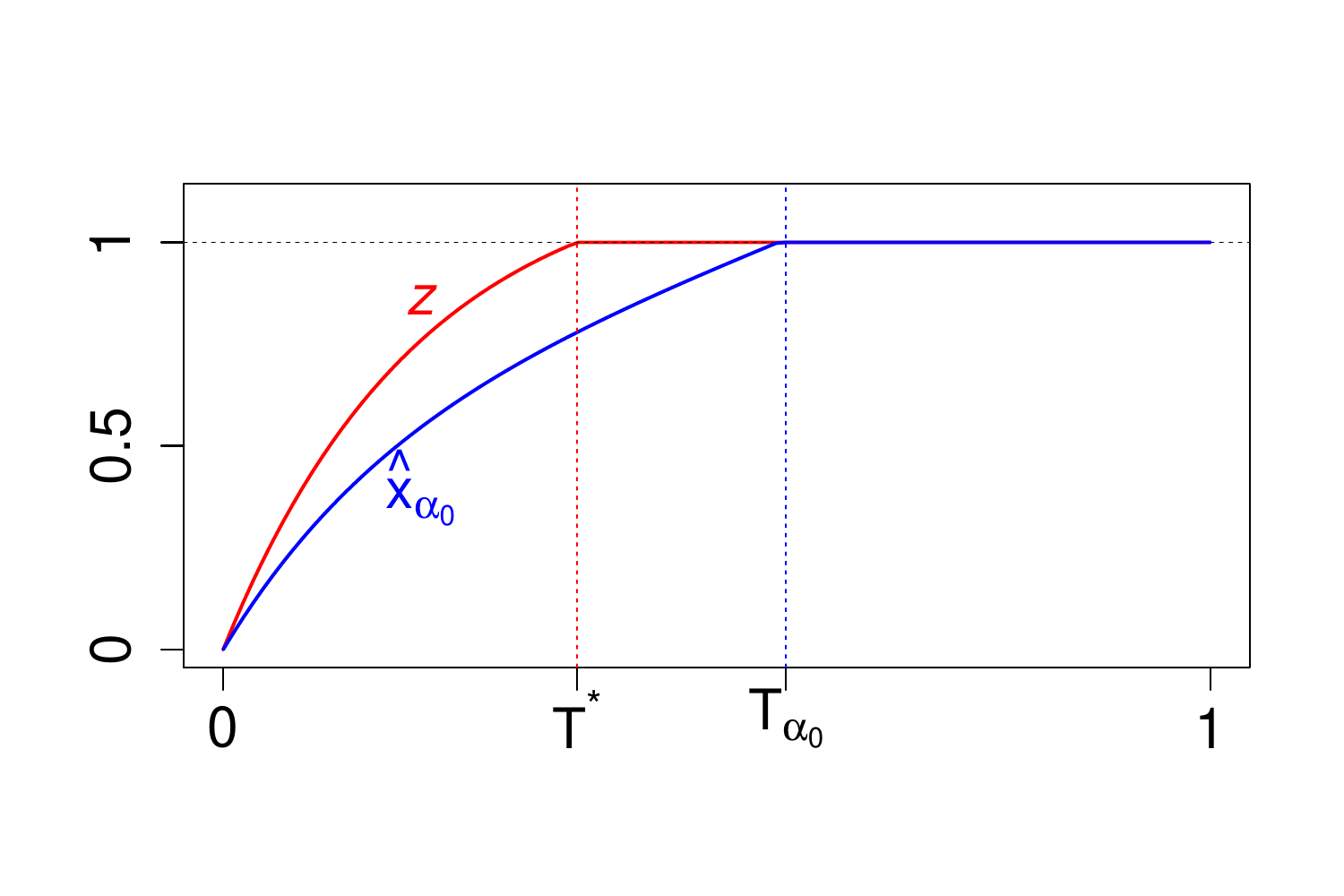} & 
\includegraphics[scale=0.4]{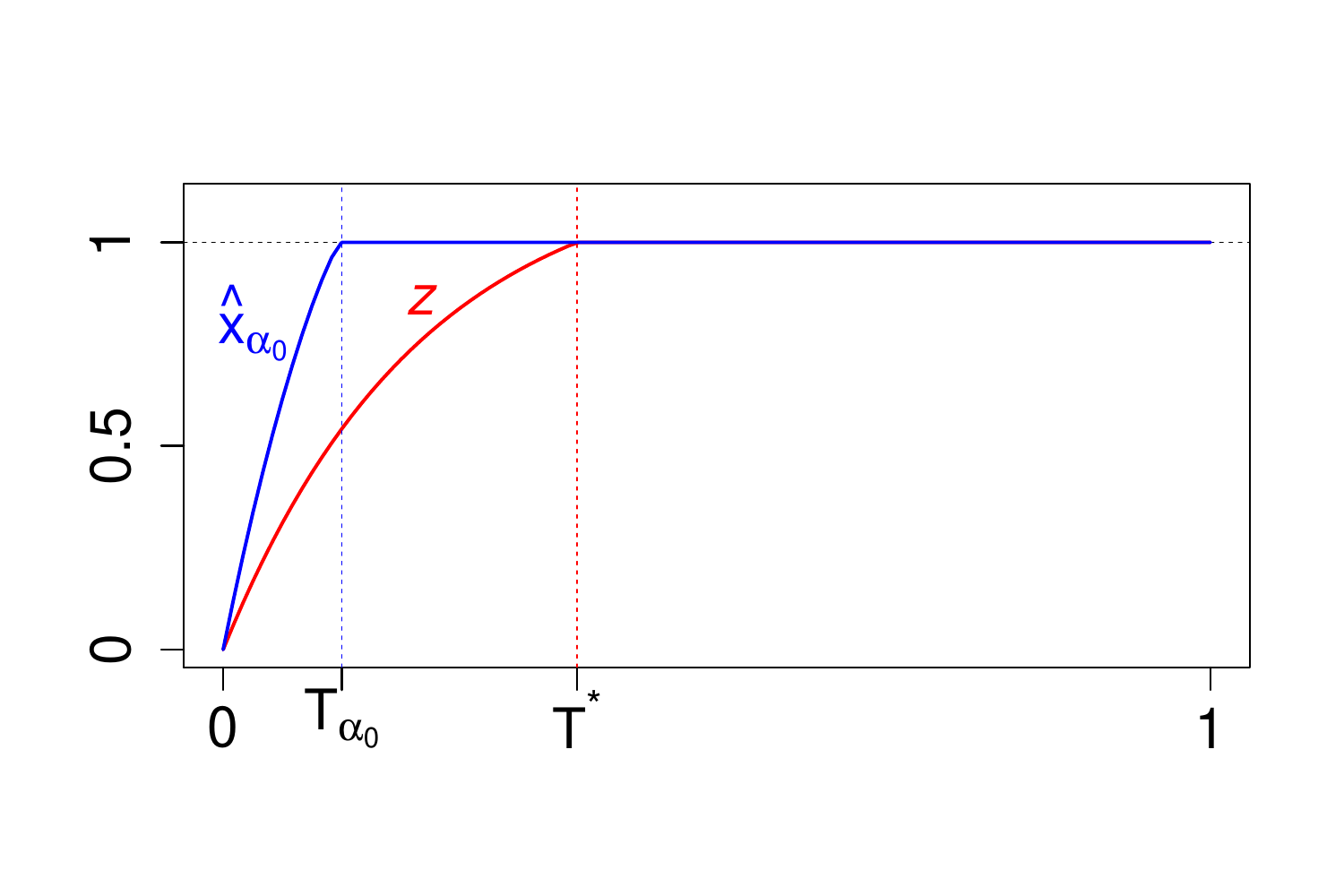} 
\end{tabular}
\caption{Graph of $\hat{x}_{\alpha_0}$ for same value of $\alpha_0 <0$ (left graph) and $\alpha_0 > 0$ (right graph) compared with the fluid limit $z \wedge 1$.}\label{Fig1}
\end{center}
\end{figure}

To prove the second part of the proposition, observe that the fluid limit \eqref{eq:fluid-limit} (until it reaches $x=1$) is a solution of $\dot{z}= 1+ c(1-z)$, so it is a solution of \eqref{eq:Ham_disc} with $\alpha=0$. If $\alpha_0 \neq 0$, the solution $x_{\alpha_0}$ can be found explicitly and it is given by \eqref{eq:sol_Ham}. We use that $x_{\alpha_0}$ is solution of \eqref{eq:Ham_disc} for the simplification of the cost function $L\left(x_{\alpha_0}, \dot{x}_{\alpha_0}\right)$. Figure \ref{fig:F_alpha0} contains the graph of $F\left(\alpha_0\right) = I\left(\hat{x}_{\alpha_0}\right)$ as a function of $\alpha_0$.
\end{proof}
\begin{center}
\begin{figure}
\includegraphics[scale=0.6]{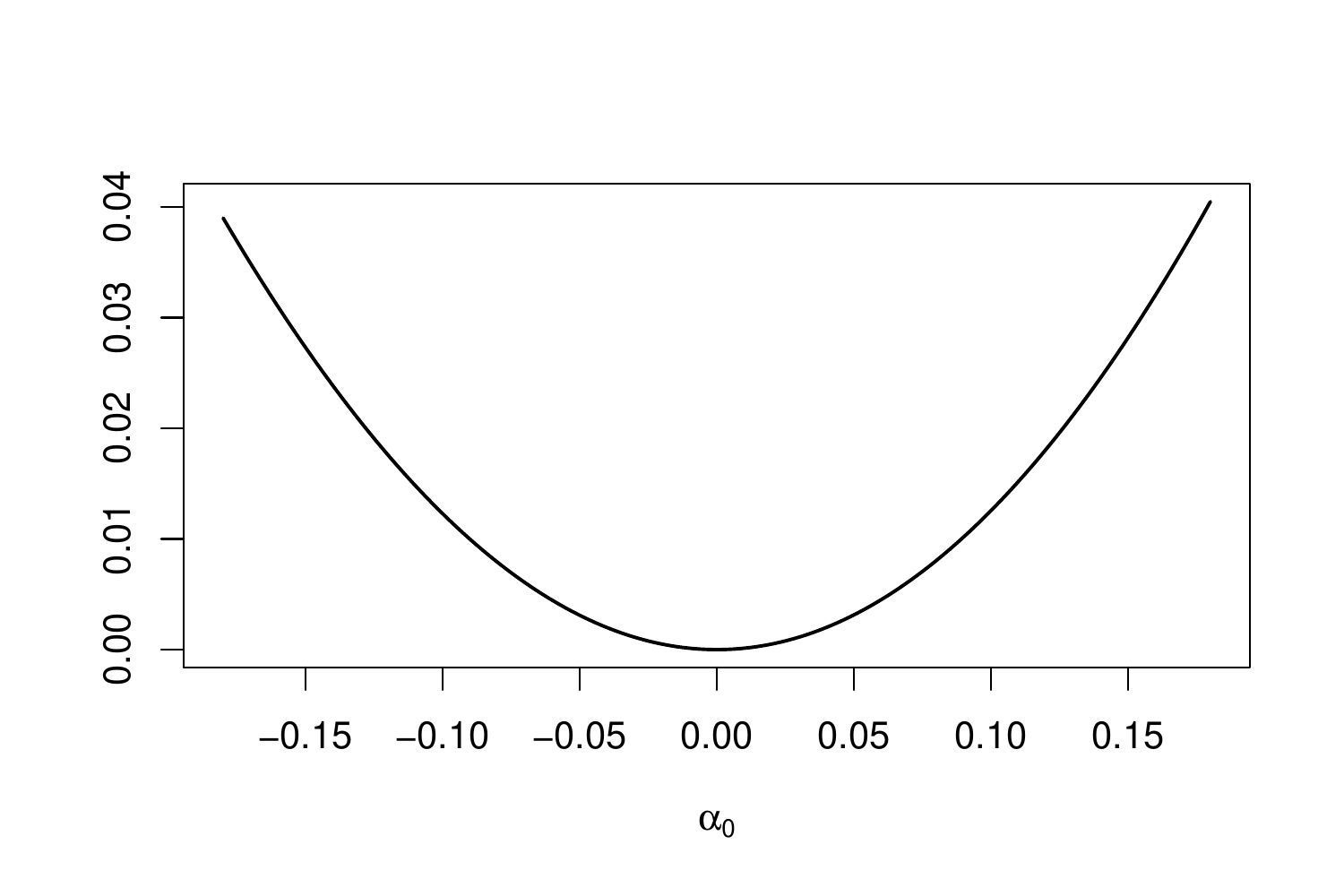} 
\caption{ Graph of the function $F\left(\alpha_0\right) = I\left(\hat{x}_{\alpha_0}\right)$, that is, a parametric version of the rate function. } \label{fig:F_alpha0}
\end{figure}
\end{center}
\begin{remark} Let us introduce some comments on the previous result:
\begin{enumerate}

\item The ODE continuity theorem is verified with respect to the initial condition for the system \eqref{eq:Ham_disc}. Then, the solution $\hat{x}_{\alpha_0}$ with initial conditions $x(0)=0$ and $\alpha(0)=\alpha_0 \approx 0$, is close to the fluid limit $z \wedge 1$.

\item The system \eqref{eq:Ham} is conservative: if $u(t)=\left(x(t), \alpha(t)\right)$ is the solution of (\ref{eq:Ham}) with initial conditions $u_0 = \left(0, \alpha_0\right)$, it verifies $\dot{u}=J\nabla H(u)$ with $J=\begin{pmatrix}0 & 1\\-1 & 0\end{pmatrix}$. Since $J$ is an antisymmetric matrix, it results that $\frac{\text{d}}{\text{d}t}H(u)=\left(\nabla H(u)\right)^t J \nabla H(u)=0$ for all $t$. Then, the solutions of the general equation \eqref{eq:Ham} are contained in the level sets of the Hamiltonian $H$.
\end{enumerate}
\end{remark}

\subsection{LDP for the size of the independent set constructed by the algorithm}
In the previous section, we presented a path-space LDP for the exploration process defined in Section \ref{Introduction}. In this section, we derive from this theorem and the previous proposition about the rate optimization over a specific set, an LDP for the sequence of random variables $\left\{\frac{T_N^*}{N}\right\}_{N}$. This theorem provides quantitative results for the probability of the independent set's size being bigger/smaller than selected bounds. 

\begin{theorem}\label{LDP_T}
Consider $T_N^*$ defined before as the stopping time of the greedy exploration process over $G(N,\frac{c}{N})$. 
\begin{enumerate}
\item If $\varepsilon>0$ is such that $T^*+\varepsilon<1$, then $$\underset{N}{\lim} \frac{1}{N} \log \mathbb{P} \left(\frac{T^*_N}{N} \geq T^*+\varepsilon \right) = -F\left(\alpha_0 (T^* + \varepsilon)\right),$$ where $\alpha_0(T^*+\varepsilon)$  is the unique real number $\alpha_0<0$ such that $T_{\alpha_0} = T^*+\varepsilon$.
\item If $\varepsilon>0$ is such that $T^*-\varepsilon>0$, then $$\underset{N}{\lim} \frac{1}{N} \log \mathbb{P} \left(\frac{T^*_N}{N} \leq T^*-\varepsilon \right) = -F\left(\alpha_0 (T^* - \varepsilon)\right),$$ where $\alpha_0(T^*-\varepsilon)$ is the unique real number $\alpha_0>0$ such that $T_{\alpha_0} = T^*-\varepsilon$.
\end{enumerate}
In both cases $F(\alpha_0)$ and $T_{\alpha_0}$ are as in Proposition \ref{prop:rate_function_optimization}.
\end{theorem}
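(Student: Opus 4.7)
The plan is to derive the LDP for $\{T_N^*/N\}_N$ from the path-space LDP of Theorem \ref{LDP_for_Y} by a contraction-style argument. Since $Y^N$ is non-decreasing and takes the value $1$ precisely from (scaled) step $T_N^*/N$ onward, the event $\{T_N^*/N \ge T^*+\varepsilon\}$ coincides, up to an $O(1/N)$ error absorbed in the exponential scale, with $\{Y^N(T^*+\varepsilon) < 1\}$. Dually, $\{T_N^*/N \le T^*-\varepsilon\}$ coincides with $\{Y^N(T^*-\varepsilon) = 1\}$. Thus I would introduce
\[
A_\varepsilon^+ = \{\varphi \in D_E[0,1] : \varphi(T^*+\varepsilon) < 1\}, \qquad A_\varepsilon^- = \{\varphi \in D_E[0,1] : \varphi(T^*-\varepsilon) = 1\},
\]
and reduce the problem to computing $-\inf_{\varphi \in A_\varepsilon^\pm} I(\varphi)$, where $I$ is the good rate function of Theorem \ref{LDP_for_Y}.

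The next step is to verify that $A_\varepsilon^\pm$ are $I$-continuous and then to evaluate the two variational problems. Verifying $I$-continuity amounts to showing that the infimum over the interior (used for the lower bound) equals the infimum over the closure (used for the upper bound); this is done by perturbing the optimal path slightly in the right direction and appealing to the continuity of $F$ and of $\alpha_0 \mapsto T_{\alpha_0}$. Proposition \ref{prop:rate_function_optimization}(1) then reduces each infimum to the one-dimensional optimization
\[
\inf_{\varphi \in A_\varepsilon^+} I(\varphi) = \inf\{F(\alpha_0) : T_{\alpha_0} \ge T^*+\varepsilon\}, \qquad \inf_{\varphi \in A_\varepsilon^-} I(\varphi) = \inf\{F(\alpha_0) : T_{\alpha_0} \le T^*-\varepsilon\}.
\]

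To identify the minimizers, I would establish that $\alpha_0 \mapsto T_{\alpha_0}$ is a continuous, strictly decreasing bijection of $\R$ onto an interval containing $T^*$. Strict monotonicity follows from a comparison argument applied to the Hamilton system \eqref{eq:Ham_disc}: by uniqueness, two trajectories with different initial $\alpha_0$ never cross, and the larger initial value of $\alpha_0$ produces a larger $\dot{x}$, hence an earlier hitting time of $1$. Since $T_0 = T^*$, this yields unique roots $\alpha_0(T^*+\varepsilon) < 0$ and $\alpha_0(T^*-\varepsilon) > 0$, so the feasible sets are $(-\infty,\alpha_0(T^*+\varepsilon)]$ and $[\alpha_0(T^*-\varepsilon),+\infty)$ respectively.

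The infima are then located at the boundary of the feasible sets by monotonicity of $F$. Indeed $F(0) = 0$ is the unique zero of $F$ (the fluid limit is the only zero-cost trajectory by the remark following Theorem \ref{LDP_for_Y}), and $F$ is continuous; combined with the convexity of $L$ in $\beta$ and a differentiation of \eqref{eq:F_alpha_0} along the flow of \eqref{eq:Ham_disc}, one obtains that $F$ is strictly decreasing on $(-\infty,0)$ and strictly increasing on $(0,+\infty)$, as visible in Figure \ref{fig:F_alpha0}. This gives
\[
\inf_{\alpha_0 \le \alpha_0(T^*+\varepsilon)} F(\alpha_0) = F(\alpha_0(T^*+\varepsilon)), \qquad \inf_{\alpha_0 \ge \alpha_0(T^*-\varepsilon)} F(\alpha_0) = F(\alpha_0(T^*-\varepsilon)),
\]
which combined with the LDP yields the two claims. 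The main technical obstacle I anticipate is the rigorous proof of the strict monotonicity of $F$ on each side of $0$ and the matching of the infima on open and closed versions of $A_\varepsilon^\pm$; both rest on the explicit representation \eqref{eq:sol_Ham} together with the continuity of the flow \eqref{eq:Ham_disc} in its initial condition.
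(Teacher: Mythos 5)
Your proposal is correct in outline and follows essentially the same route as the paper: apply the path-space LDP of Theorem \ref{LDP_for_Y} to a suitable event set, reduce via Proposition \ref{prop:rate_function_optimization} to a one-dimensional optimization over $\alpha_0$, use the monotonicity of $\alpha_0\mapsto T_{\alpha_0}$ (from uniqueness/comparison for \eqref{eq:Ham_disc}) to identify the feasible set as a half-line with boundary $\alpha_0(T^*\pm\varepsilon)$, and conclude by monotonicity of $F$ on each side of $0$. The one step you defer as ``the main technical obstacle'' is settled in the paper by a concrete comparison rather than by differentiating $F$: for $\alpha_1<\alpha_2<0$ the integrand $t\mapsto L\left(x_{\alpha_0}(t),\dot x_{\alpha_0}(t)\right)$ is pointwise larger for $\alpha_1$ and $T_{\alpha_1}>T_{\alpha_2}$, so $F(\alpha_2)\leq\int_0^{T_{\alpha_2}}L(x_{\alpha_1},\dot x_{\alpha_1})\,\text{d}t<F(\alpha_1)$, giving strict monotonicity of $F$ on $(-\infty,0)$ (and symmetrically on $(0,\infty)$); the $I$-continuity of the chosen event sets is asserted rather than proved in both your argument and the paper's (the paper uses the hitting-time set $A_\varepsilon$, you use single-time conditions, a cosmetic difference).
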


\begin{proof} We only prove the first statement because the proof of the second one is analogous. Define the set $A_{\varepsilon}$ such that $A_{\varepsilon}=\{\varphi \in D_E\left[0,1\right]: \, \varphi(0)=0, \varphi$ is increasing, $0\leq \varphi(t)\leq 1$  for all $t$ and $\inf\left\{t: \, \varphi(t)=1\right\}\geq T^*+\varepsilon\}$. By construction, $A_{\varepsilon}$ is a good set for $I$, then
\begin{gather*}
\lim_{N} \frac{1}{N} \log \mathbb{P} \left(\frac{T^*_N}{N} \geq T^*+\varepsilon \right)  = \lim_N \frac{1}{N} \log \mathbb{P} \left(Y^N\in A_{\varepsilon} \right)
= -\inf_{\left\{\alpha_0:\, \hat{x}_{\alpha_0}\in A_{\varepsilon}\right\}} F\left(\alpha_0\right).
\end{gather*}

Let $x_{\alpha_0}$ be the solution of the homogenous ODE \eqref{eq:EL_disc} with initial velocity $v_0 = \dot{x}_{\alpha_0}(0)=1+ce^{\alpha_0}$. The uniqueness of the solution ensures that the following monotony property with respect to the initial condition is verified:
\begin{gather*}
\text{ if } \alpha_0 <\alpha_1 \Rightarrow x_{\alpha_0}(t)<x_{\alpha_1}(t) \text{ for all } t \Rightarrow T_{\alpha_0} >T_{\alpha_1}.
\end{gather*}
In addition, it can be seen that for all $T\in\left(T^*,\, 1\right)$, there exists a unique value $\alpha_0= \alpha_0(T)<0$ such that $x_{\alpha_0}(T)=1$ (i.e. $T=T_{\alpha_0}$). Then, there is only one $\alpha_0^*<0$ such that $x_{\alpha_0^*}\left(T^*+\varepsilon\right)=1$ and
\begin{itemize}
\item if $\alpha_0 \leq \alpha_0^* \Rightarrow T_{\alpha_0}\geq T^*+\varepsilon \Rightarrow \hat{x}_{\alpha_0} \in A_{\varepsilon} $,
\item if $\alpha_0 > \alpha_0^* \Rightarrow T_{\alpha_0}< T^*+\varepsilon \Rightarrow \hat{x}_{\alpha_0} \notin A_{\varepsilon} $,
\end{itemize}
which implies that
$
 \underset{\{\alpha_0: \hat{x}_{\alpha_0}\in A_{\varepsilon}\}}{\inf} F\left(\alpha_0\right) = \underset{\{\alpha_0 \leq \alpha_0^*\}}{\inf} F\left(\alpha_0\right)
$.
To complete the proof it suffices to prove that $\underset{\{\alpha_0 \leq \alpha_0^*\}}{\inf} F\left(\alpha_0\right)= F(\alpha_0^*)$.
Let $h(\alpha_0, t)= L(x_{\alpha_0}, \dot{x}_{\alpha_0})$ and $\alpha_1 < \alpha_2 < 0$. Using the monotony that we mentioned before, it can be seen that $\frac{\partial}{\partial \alpha_0} h(\alpha_0,t)<0$ for all $\alpha_0<0$ and $t\in [0,1]$,  that is  $h(\alpha_1,t)>h(\alpha_2,t)$ for all $t$. Finally, since $T_{\alpha_1}>T_{\alpha_2}$ we obtain:
\begin{gather*}
F(\alpha_2)=\int_{0}^{T_{\alpha_2}} L(x_{\alpha_2}, \dot{x}_{\alpha_2})\text{d}t \leq 
\int_{0}^{T_{\alpha_2}} L(x_{\alpha_1}, \dot{x}_{\alpha_1})\text{d}t <
\int_{0}^{T_{\alpha_1}} L(x_{\alpha_1}, \dot{x}_{\alpha_1})\text{d}t= F(\alpha_1),
\end{gather*}
which completes the proof.
\end{proof} 

\subsection{On the size of the maximum independent set}
The problem of finding the \emph{maximum} independent sets in deterministic graphs is known to be NP-hard. An interesting research question is to find classes on random graphs where finding \emph{maximum} independent sets can be (at least at the first order in $N$) obtained with polynomial complexity.
This question is, of course, an instance of a more general viewpoint which
aims at identifying phase transitions in the analysis of combinatorial optimization problems, allowing to describe drastically different sce\-narios depending on a few macroscopic parame\-ters, sometimes called order-parameters.
 
This type of results has been proven to hold for Erd\"os-R\'enyi graphs and configura\-tion models in \cite{Spitzer} and \cite{Jonckheere}.  The order-parameter being $c$ the mean number of neighbors of a given node. Interestingly the phase transition does not correspond for the graph
to be subcritical ($c<1$) but to a much finer property of the landscape of maximal independent sets. The phase transition corresponds to $c < e$ and differentiates between regimes where a simple degree-greedy algorithm reaches (asymptotically) the maximum independent set or not. This same phase transition is reflected in the properties of the spectrum of the graph, see \cite{salez}.

We conjecture that the large deviations characteristics of the greedy algorithm for discovering maximum independent set also have an interesting transition for values of $c$ around $e$. Since the exact optimal order-one asymptotic value of the maximal independent set's size is known only for values of $c<e$, we cannot yet display a full characterization of this phenomena.
We can, however, obtain interesting numerical results by using the 
Erd\"os bound, instead of the true value. Let $\sigma_N$ the \emph{maximum} size of the independent set of an ER graph $G(N,c/N)$, then a.s.
$\sigma_N \leq \frac{2 \log(c)}{c} N (1 +o(N))$ if $c\geq 3$.
In Figure \ref{figure:F_sigma} we compute the large deviation rate corresponding to the event $\{ \frac{T_N^*}{N} \geq \sigma_i^*(c) \}$ for $i=1,2$. Here $\sigma_1^*$
is the exact proportion of the maximum independent set of an ER graph $G(N, c/N)$ when $c<e$ (\cite{Jonckheere}) and it is given by $\sigma_1^*(c)=w(c)+\frac{c}{2}\left(w(c)\right)^2$ with $w(c)=e^{-W(c)}$ and $W(x)$ the Lambert function.  The value $\sigma_2^*(c)=\frac{2}{c}\log(c)$ is the Erd\"os upper bound for the proportion of the maximum independent set for $c\geq 3$. 

Though the numerical computations for $c>e$ could give largely overestimated values,
we believe it nevertheless illustrates the clear change of regime around the value $e$. It shows that the independent sets geometry changes, leading to significantly greater large deviations constants for the greedy exploration when $c$ gets larger than $e$.
This characterization of the ``energy" landscape is a usual situation in statistical physics where interesting phase transitions can be well described through large deviations, see \cite{Touchette}.

\begin{figure}
\begin{center}
\includegraphics[scale=0.5]{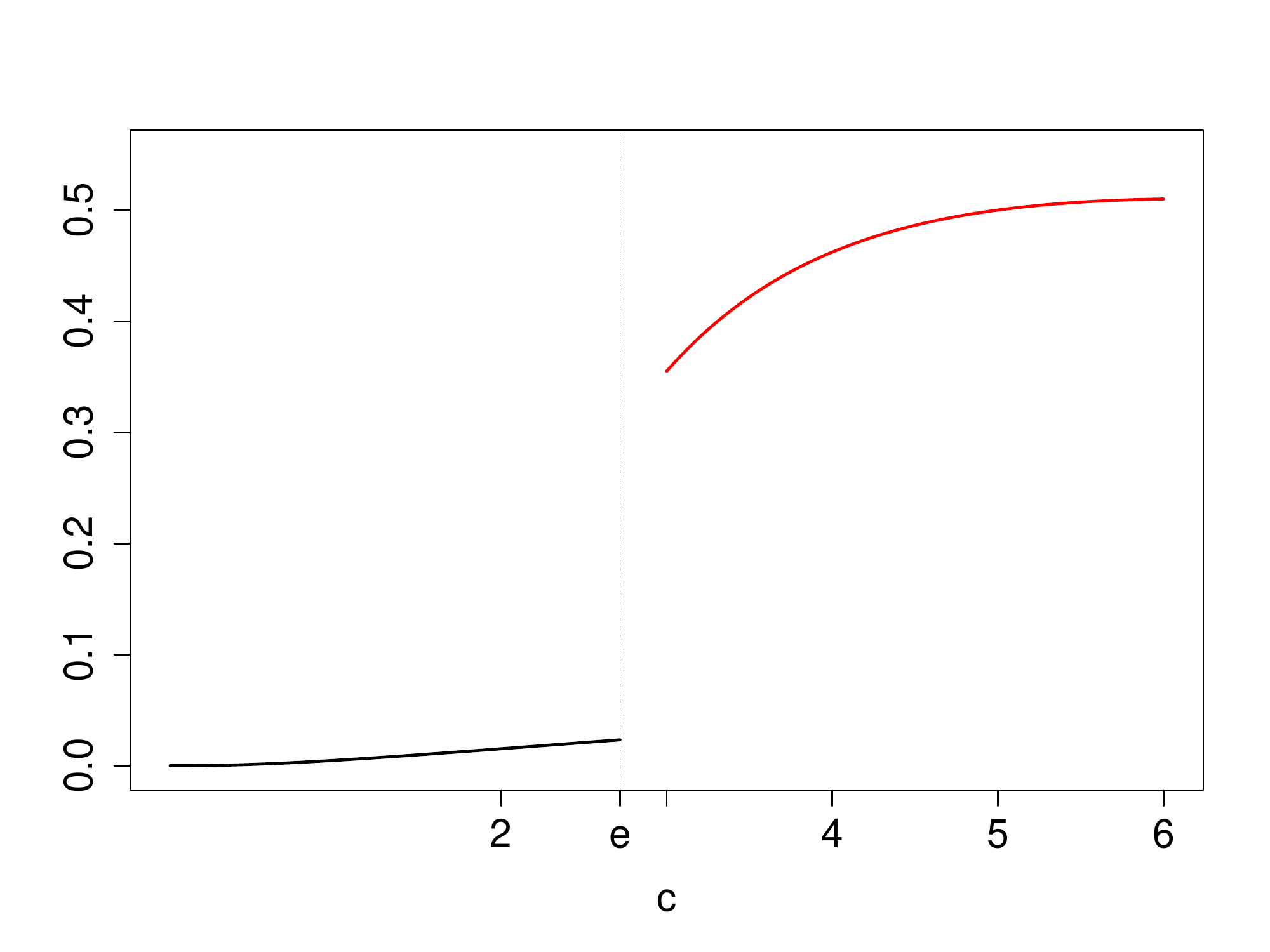} 
\end{center}
\caption{Evolution of $F(\alpha_0(\sigma_1^*(c)))$ for $0<c<e$ and $F(\alpha_0(\sigma_2^*(c)))$ for $c\geq 3$. }
\label{figure:F_sigma}
\end{figure}
\section{Proof of  Theorem \ref{LDP_for_Y}}
\label{sec:LDP_proof}

In this section, we first briefly describe the theory and main results of \cite{F&K} in our context, and then we prove that the previously defined sequence of processes $\{Y_N\}_N$ verifies their assumptions. We organize the main assumptions in four steps described below.

\subsection{Theory of Feng and Kurtz in our context} 
As mentioned in Section \ref{Introduction}, Feng and Kurtz based their study of large deviations on the \emph{exponential tightness} method. The main result according to this approach is Bryc's theorem. This theorem states that if $\left(\mathcal{X}, d\right)$ is a Polish space; $\left\{ \mathbb{P}_N \right\}_N$ is an exponentially tight \footnote{The sequence $\left\{ \mathbb{P}_N \right\}_N$ is exponentially tight if for all $\alpha >0$ exists a compact $K_{\alpha} \subset \mathcal{X}$ such that ${\displaystyle \limsup_{N} \frac{1}{N} \log \mathbb{P}_N \left(K_{\alpha}^{c}\right) \leq -\alpha}$.} sequence of probability measures defined on $\mathcal{X}$, and the following limit exists: 
\begin{gather*}
\Lambda(f) = \lim_{N} \frac{1}{N} \log \int_{\mathcal{X}} e^{Nf(x)}\text{d}\mathbb{P}_N (x) 
\quad \forall f\in C_{b}(\mathcal{X}),
\end{gather*}
then $\left\{ \mathbb{P}_N \right\}_N$ satisfies an LDP with rate function $I: \mathcal{X}\rightarrow \mathbb{R}$ such that 
$
I(x)= \sup \left\{ f(x)-\Lambda(f): \; f\in C_{b}(\mathcal{X}) \right\},
$
where $C_b(\mathcal{X})$ is the space of bounded and continuous functions $f:\mathcal{X}\rightarrow \mathbb{R}$.
\bigskip

Consider now the case in which $\left\{ \mathbb{P}_N \right\}_N$ comes from a sequence of continuous or discrete-time Markov processes $\left\{Y^N\right\}_N$ with space of states $E_N$. Suppose that $E_N \subset E$ for all $N$. Then $\mathcal{X}= D_E \left[0, T\right]$ ($T\leq +\infty$), the space of c\`adl\`ag functions equipped with the Skorohod topology (in the discrete case, the time is  transformed to be continuous, as we did for the process $\left\{Z_k^N\right\}_k$ in \eqref{eq:Zcont}). There are results in the literature that ensure equivalent conditions to the exponential tightness, but the calculation of $\Lambda (f)$ is very difficult or even impossible. The theory of Feng and Kurtz solves both, this problem and the exponential tightness. As the transitions characterize the Markov dynamics, instead of calculating $ \lim\limits_{N} \frac{1}{N} \log \mathbb{E} \left[ e^{N f \left(Y^N \right)} \right]$, the convergence of the \emph{Fleming semigroups} (see \cite{Fleming}) is studied: $V_{t}^{N}: Dom \left(V_{t}^{N}\right) \subset B(E)\rightarrow B(E) $ such that $V_{t}^{N}(f)(x)=  \frac{1}{N} \log \mathbb{E}\left[e^{Nf(Y^{N}_{t})}|Y_{0}^{N}=x\right],$
where $B(E)$ is the space of bounded, Borel measurable functions (i.e. $t$ is fixed and the domain of the functions $f$ is $E$ instead of the much more complex space $D_E \left[0,T\right]$). In \cite{F&K} it is proved that, under certain assumptions, the convergence of the Fleming semigroups ensures an LDP. Actually, instead of studying the convergence of $\left\{ V_t^N\right\}_{t \in \left[0,T\right]}$, the convergence of their (nonlinear) generators $H^N$ is studied. The general idea is that if there is a functional $\mathbf{H}$ such that $H_N \rightarrow \mathbf{H}$ (the type of convergence will depend on each case), $\mathbf{H}$ generates a semigroup $\mathbf{V}= \left\{V_t\right\}_t$ and the \emph{exponential compact containment condition} is verified, then the sequence $\left\{Y^N \right\}_N$ verifies an LDP with rate function $I$ that depends on $\mathbf{V}$. Moreover, if $\mathbf{H}$ is such that $\mathbf{H}\left(f\right)(x) = H \left(x, f'(x)\right)$ for all $f\in C^1\left(E\right)$ and Conditions \textbf{8.9}, \textbf{8.10} and \textbf{8.11} of \cite{F&K} are also verified, we obtain a variational version of $I$. In our particular case, the rate will be written as an action integral of $L\left(x, \beta\right) \leftrightarrow H\left(x, \alpha\right)$.

The main steps of the proof of Theorem \ref{LDP_for_Y} are now briefly outlined. As a consequence of the first two steps, the process $\{Y^N\}_N$ verifies the exponential tightness condition. Step 3 assures an LDP via the comparison principle, and finally, Step 4 provides a useful variational version of the rate. Let $T^N$ the transition operators defined in Equation \ref{eq:TN} and $H_N (f)= \log\left( e^{-Nf}T^N \left(e^{Nf}\right)\right)$.\\

\begin{paragraph}
{\textbf Step 1.}\emph{Verify the convergence of the sequence of operators $H_N$ and derive the limit operator $\mathbf{H}$.}  See Proposition \ref{prop:step1} and note that $\mathbf{H}\left(f\right)(x)=H\left(x, f'(x)\right)$ for $f\in C^1(E)$.
\end{paragraph}\\

\begin{paragraph}
{\textbf Step 2.} \emph{Verify the exponential compact containment condition.} 
The sequence $\left\{Y^N\right\}_N$ verifies the \emph{exponential compact containment condition} if for all $\alpha>0$, there exists $K_{\alpha} \subset E$ compact such that $\underset{N}{\limsup}\frac{1}{N} \log \mathbb{P} \left( \left\{ \exists \, t\in [0,T]: \, Y_{t}^{N} \notin K_{\alpha} \right\} \right) \leq -\alpha$. In our case $E$ is compact, so this condition is trivially verified by taking $K_{\alpha}=E$.
\end{paragraph}\\

\begin{paragraph}
{\textbf Step 3.} \emph{Prove that $\mathbf{H}$ generates a semigroup $\mathbf{V}= \left\{V_t\right\}_t$} (comparison principle). This is the most technical step. By definition, $V_t ^N$ verifies $\frac{\text{d}}{\text{d}t} V_{t}^{N}(f)= H_N \left(V_{t}^{N}(f)\right)$. Then $\mathbf{H}$ generates a semigroup if there is $\mathbf{V}= \left\{V_t\right\}_t$ such that for all $f\in Dom(\mathbf{V})$,
\begin{gather*}
\frac{\text{d}}{\text{d}t} V_t(f)  =  \mathbf{H} \left( V_t(f) \right); \quad
V_0(f) =  f.
\end{gather*}
The theorem of \cite{Cra&Lig} implies that $\mu_N (t)= \left( Id -\frac{t}{N} \mathbf{H}\right)^{-N}$ converges to the solution of the previous equation if $\mathbf{H}$ is m-dissipative. Then we need to prove that for all $h\in C(E)$ and $\beta>0$, there exists $f\in C^1(E)$ such that 
\begin{gather}
f-\beta \mathbf{H}(f)-h=0.
\label{eq:dissipative}
\end{gather}
However the verification of this property can be a formidable obstacle. One way out is to work with viscosity solutions and prove that the \emph{comparison principle} (see Definition \ref{comparison_principle}) for Equation \eqref{eq:dissipative} is verified. If the comparison principle is verified, then the operator $\mathbf{H}$ can be extended to $\hat{\mathbf{H}}$ such that $\hat{\mathbf{H}}$ is m-dissipative and generates a semigroup $\mathbf{V}$ (see Theorem {\bf 8.27} of \cite{F&K}). As mentioned by \cite{F&K}, the verification of the comparison principle is an analytic issue and often gives the impression of being rather involved and disconnected from the probabilistic large deviations problems. An in-depth study of the comparison principle for Hamilton-Jacobi equations in this context is presented in \cite{Kraaij}, using results from \cite{CIL} and Chapter 9 of \cite{F&K}. We follow these ideas to prove the comparison principle in our case. See Proposition \ref{prop:step3}.
\end{paragraph}\\

Once we have verified these three steps, Theorem \textbf{6.14} from \cite{F&K}  assures that $\left\{Y^N \right\}_N$ is exponentially tight and satisfies an LDP with rate function $I$ defined implicitly in terms of $V_t$. This is a theoretical result but does not provide a useful characterization of the rate. The next step provides a simplified version of the rate that can be used in practice.\\

\begin{paragraph}
{\textbf Step 4.} \emph{Construct a variational representation for the rate function $I$}. Let $L\left(x, \beta\right) \leftrightarrow H\left(x, \alpha\right)$. We state the following result:

\begin{theorem}\label{thm:variational_version}
If Conditions \textbf{8.9}, \textbf{8.10} and \textbf{8.11} of \cite{F&K} are also verified, then:
\begin{enumerate}
\item[(a)] $V_t\left(f\right) = \mathcal{V}_t\left(f\right)$ for all $f \in Dom\left(\mathbf{V}\right)$, 
where
\begin{gather*}
\mathcal{V}_t(f)(x_0)= \underset{\{(\mathbf{x},\lambda)\in \mathcal{Y}:\, \mathbf{x}(0)=x_0\}}{\sup} \left\{ f(\mathbf{x}(t)) - \iint_{U\times [0,t]}
L\left(\mathbf{x}(s), u\right) \lambda(\text{d}u \times \text{d}s)\right\},
\end{gather*}
is the Nisio semigroup (see \cite{ Nisio76, Nisio78, Fleming99, ElKaroui}) associated to the cost function $-L$. $\mathcal{Y}$ is a control subset that we define in subsection \ref{step4}.

\item[(b)] $I(\mathbf{x})= \underset{ \{ \lambda: \, (\mathbf{x}, \lambda)\in \mathcal{Y} \} }{\inf} 
\left\{ \iint_{U\times [0,1]} L(\mathbf{x}(s), u) \lambda ( \text{d} u \times \text{d} s ) \right\}$.

\item[(c)] Moreover, the rate function can be written as an action integral:
\begin{gather*}
I(\mathbf{x})= \int_{0}^{1} L(\mathbf{x}(s), \dot{\mathbf{x}}(s))\text{d}s, 
\end{gather*}
if $\mathbf{x}\in \mathcal{H}_L$ and $I(\mathbf(x))=+\infty$ in another case.
\end{enumerate}
\end{theorem}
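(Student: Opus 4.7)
The plan is to reduce the three claims to the Feng--Kurtz machinery. Parts (a) and (b) are essentially bookkeeping once we have verified Conditions 8.9, 8.10 and 8.11 in \cite{F&K}; all the substantive content sits in part (c), where an infimum over relaxed controls has to collapse to the pointwise action integral.

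First I would record that our Hamiltonian $H(x,\alpha)=\alpha+c(1-x)(e^{\alpha}-1)$ from (\ref{eq:H}) satisfies the structural hypotheses required by the theory. Condition 8.9 asks for the form $\mathbf{H}f(x)=H(x,\nabla f(x))$ with $H$ continuous and convex in~$\alpha$, both of which are immediate (note $\partial_\alpha^2 H=c(1-x)e^{\alpha}\geq 0$). Condition 8.10 is a superlinear growth requirement on the Legendre dual; from (\ref{eq_Lcost_Y}) we have $L(x,\beta)/|\beta|\to\infty$ as $|\beta|\to\infty$ uniformly on compact subsets of $\{x<1\}$, and on the boundary $x=1$ the effective domain collapses to a point so superlinearity is automatic. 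Condition 8.11 is the continuity/measurability of $L$ on its effective domain, which can be read off (\ref{eq_Lcost_Y}). Once these checks are in place, the Nisio representation $V_t=\mathcal{V}_t$ of part~(a) is the statement of Feng--Kurtz's Theorem 8.14 applied to $-L$, and the control-theoretic rate formula of part~(b) is then the general variational identity for $I$ obtained by iterating the semigroup and passing through Bryc's formula as in Chapter~8 of \cite{F&K}.

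The substance of the argument lies in (c). For the lower bound, take any $(\mathbf{x},\lambda)\in\mathcal{Y}$ with finite total cost. The coercivity from Condition 8.10 gives an a priori $L^1$ bound on the first marginal drift, forcing absolute continuity of $\mathbf{x}$ and hence $\mathbf{x}\in\mathcal{H}_L$. Disintegrating $\lambda(du\times ds)=\lambda_s(du)\,ds$, the defining relation $\dot{\mathbf{x}}(s)=\int u\,\lambda_s(du)$ together with convexity of $L(x,\cdot)$ yields, via Jensen,
\begin{equation*}
\int_0^1 L(\mathbf{x}(s),\dot{\mathbf{x}}(s))\,ds\;\leq\;\iint_{U\times[0,1]} L(\mathbf{x}(s),u)\,\lambda(du\times ds).
\end{equation*}
Taking the infimum over $\lambda$ and comparing with (b) gives $\int_0^1 L(\mathbf{x},\dot{\mathbf{x}})\,ds\leq I(\mathbf{x})$. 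The reverse inequality is obtained by the Dirac selection $\lambda(du\times ds)=\delta_{\dot{\mathbf{x}}(s)}(du)\,ds$, which lies in $\mathcal{Y}$ for $\mathbf{x}\in\mathcal{H}_L$ and realises the action integral exactly. If $\mathbf{x}\notin\mathcal{H}_L$ then either $\mathbf{x}$ fails to be absolutely continuous, or $\int_0^1 L(\mathbf{x},\dot{\mathbf{x}})\,ds=+\infty$; in both cases the same coercivity argument rules out any finite-cost control, so $I(\mathbf{x})=+\infty$.

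The delicate point---the one I expect to absorb most of the actual writing---is the degeneracy of $L$ on the boundary of its effective domain. The cost is finite only on $\{x<1,\,\beta\geq 1\}\cup\{(1,0)\}$, reflecting the fact that the process $Y^N$ is strictly increasing with unit minimum drift and is absorbed at $1$. One must check (i) that admissible controls $\lambda$ with finite cost automatically keep $\mathbf{x}$ inside this effective domain, in particular that $\dot{\mathbf{x}}\geq 1$ almost everywhere before absorption and $\dot{\mathbf{x}}=0$ after, and (ii) that the Jensen step above and the Dirac-selection step both interact cleanly with the constraint $\beta\geq 1$ (the convex set on which $L(x,\cdot)$ is finite is closed, so averaging preserves it). Handling the absorption time---where one must concatenate the pre-absorption trajectory with the stationary tail and verify that the Nisio supremum is attained by such concatenations---is the only genuinely non-routine piece of the argument.
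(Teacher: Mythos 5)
Your overall route for the theorem itself coincides with the paper's: parts (a) and (b) are obtained by invoking the Chapter~8 machinery of \cite{F&K} (the paper cites Theorems \textbf{8.14}, \textbf{8.23}, \textbf{8.27}, \textbf{8.29} with $U=\R$, $A(f)(x,u)=f'(x)u$, $\Gamma=E\times U$), and part (c) is exactly the paper's argument: convexity of $L(x,\cdot)$ plus Jensen for the lower bound, and the Dirac control $\lambda(\text{d}u\times\text{d}s)=\delta_{\dot{\mathbf{x}}(s)}(\text{d}u)\,\text{d}s$ for the upper bound.

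One concrete inaccuracy should be fixed, though. Conditions \textbf{8.10} and \textbf{8.11} of \cite{F&K} are not what you describe. They are not a superlinear-growth condition on $L$ nor a continuity/measurability condition; they are existence-of-control conditions: \textbf{8.10} asks that from every $x_0\in E$ there is a pair $(\mathbf{x},\lambda)\in\mathcal{Y}$ with zero total cost (here realized by the fluid limit with the control $\delta_{q(\mathbf{x}(s))}$, $q(x)=1+c(1-x)$), and \textbf{8.11} asks that for every $f\in C^1(E)$ and $x_0$ there is a pair realizing $\mathbf{H}f$ along the trajectory, i.e.\ a solution of $\dot{\mathbf{x}}=H_\alpha(\mathbf{x},f'(\mathbf{x}))$ staying in $E$. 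The paper verifies precisely these statements in separate propositions; since your theorem takes them as hypotheses, the mischaracterization does not invalidate the conditional statement, but it does infect one step of your proof of (c): you derive absolute continuity of $\mathbf{x}$ from ``the coercivity of Condition 8.10,'' which that condition does not provide. The clean argument is to test the relaxed control equation defining $\mathcal{Y}$ with $f(x)=x$: properties (1)--(2) give $\mathbf{x}(t)-\mathbf{x}(0)=\int_0^t\!\big(\int_U u\,\lambda_s(\text{d}u)\big)\text{d}s$ with integrable integrand, hence $\mathbf{x}$ is absolutely continuous with $\dot{\mathbf{x}}(s)=\int_U u\,\lambda_s(\text{d}u)$ a.e., after which Jensen applies; alternatively, superlinearity of $\beta\mapsto L(x,\beta)$ can be read directly from \eqref{eq_Lcost_Y}, but it should not be attributed to Condition \textbf{8.10}. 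With that correction, your (c) matches the paper's proof, and your closing remarks on the effective domain $\{x<1,\ \beta\geq 1\}\cup\{(1,0)\}$ and on concatenation at the absorption time are consistent with how the paper handles the verification of \textbf{8.11}.
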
  

\begin{proof}
The first two sentences, (a) and (b), are proved by Theorems \textbf{8.14}, \textbf{8.23}, \textbf{8.27} and \textbf{8.29} of \cite{F&K} taking $E=[0,1]$, $U=\R$, the linear operator $A:C^1(E)\rightarrow M\left(E\times U\right)$ such that $A(f)(x,u)=f'(x)u$, $L(x, \beta) \leftrightarrow H(x, \alpha)$ and $\Gamma = E\times U$. For (c) we use that $L$ is convex w.r.t. the second variable and Jensen's inequality.
\end{proof}
Then, it remains for us to verify Conditions {\bf 8.9}, {\bf 8.10} and {\bf 8.11}. We do this in Section \ref{step4}.
\end{paragraph}\\

We organize the proof of Theorem \ref{LDP_for_Y} using the steps mentioned above, that are presented as propositions. As mentioned before, Step 2 is trivially verified in our case.


\subsection{Step 1: Convergence of the nonlinear operators}

Let $H_N: Dom(H_N)\subset B(E)\rightarrow B(E)$ such that $H_N  = \log \left[ e^{-Nf(x)} T_N \left( e^{Nf}\right)(x) \right]$ with $T_N$ the transition operator for the process $\left\{\frac {Z_k^N}{N}\right\}_k$.

\begin{proposition} \label{prop:step1}
There exists a functional $\mathbf{H}$ such that $H_N$ converges to $\mathbf{H}$ when $N\rightarrow \infty$ in the following sense: $\underset{N\rightarrow \infty}{\lim}      \underset{x\in E^N}{\sup} \left|H_N(f)(x)-\mathbf{H}(f)(x)\right|=0$ for all $f\in C^1(E)$. The functional $\mathbf{H}: C^{1}(E) \rightarrow B(E)$ is such that $ \mathbf{H}(f)(x)=H(x, f'(x))$, where $H: E \times \mathbb{R}\rightarrow \mathbb{R}$ is defined by
\begin{gather}
H(x, \alpha) = \begin{cases} \alpha + c(1-x)\left(e^{\alpha}-1 \right), & \text{ if } x<1,\\ 0, & \text{ if } x=1. \end{cases}
\label{eq:H}
\end{gather}
\end{proposition}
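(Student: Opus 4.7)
The plan is to compute $H_N(f)(x)$ directly by Taylor-expanding $f$ to first order and then recognizing the remaining expectation as the moment generating function of a Binomial, for which we have a closed form.

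Fix $f \in C^1(E)$. On $E^N \cap [0,1)$, with $m := N - Nx - 1$ and $\zeta_{N,x} \sim \mathrm{Bin}(m, c/N)$,
\[
H_N(f)(x) \;=\; \log\mathbb{E}\!\left[\exp\!\Big(N\big[f(x+\tfrac{1+\zeta_{N,x}}{N})-f(x)\big]\Big)\right].
\]
By the fundamental theorem of calculus,
\[
N\!\left[f(x+\tfrac{1+k}{N})-f(x)\right] \;=\; (1+k)f'(x) \,+\, \rho_N(k,x), \qquad |\rho_N(k,x)|\le (1+k)\,\omega_{f'}\!\big(\tfrac{1+k}{N}\big),
\]
where $\omega_{f'}$ is the modulus of continuity of $f'$ on the compact $[0,1]$. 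If one could simply drop $\rho_N$, the remaining exponent would be linear in $\zeta_{N,x}$ and the explicit Binomial MGF would give
\[
\log\mathbb{E}[e^{(1+\zeta_{N,x})\alpha}] \;=\; \alpha + m\log\!\big(1+\tfrac{c}{N}(e^\alpha-1)\big) \;\xrightarrow[N\to\infty]{}\; \alpha + c(1-x)(e^\alpha-1) \;=\; H(x,\alpha),
\]
with the convergence uniform in $x$ and in $\alpha$ on any bounded set.

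The actual argument I would use is a sandwich. Given $\epsilon>0$, by uniform continuity of $f'$ on $[0,1]$ choose $\delta>0$ with $\omega_{f'}(\delta)<\epsilon$. On the typical event $\{(1+\zeta_{N,x})/N < \delta\}$ the exponent is bounded above by $(1+\zeta_{N,x})(f'(x)+\epsilon)$ and below by $(1+\zeta_{N,x})(f'(x)-\epsilon)$, so the MGF computation above sandwiches $H_N(f)(x)$ between $H(x, f'(x)\pm\epsilon) + o_N(1)$; continuity of $H$ in $\alpha$ then yields the result when $\epsilon\to 0$. The contribution of the atypical event $\{(1+\zeta_{N,x})/N\ge \delta\}$ to the exponential moment is controlled crudely by $e^{NC}\,\mathbb{P}(\zeta_{N,x}\ge N\delta-1)$ with $C=\|f'\|_\infty+\omega_{f'}(1)$, and a Chernoff bound applied to the Binomial MGF (taking the tilt parameter to be $\log N$) yields $\mathbb{P}(\zeta_{N,x}\ge N\delta-1)\le e^{-N\delta\log N + cN}$, so this term vanishes uniformly in $x$ as $N\to\infty$.

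Finally, for the boundary point $x=1$ the chain is absorbed ($T_N(f)(1)=f(1)$), hence $H_N(f)(1)=0=H(1,f'(1))$. The main technical care needed, in my view, is the uniformity across all of $E^N$, in particular near $x=1$ where $m=N-Nx-1$ becomes small and the Binomial degenerates; there, however, both the pre-limit $m\log(1+(c/N)(e^\alpha-1))$ and the limit $c(1-x)(e^\alpha-1)$ vanish to order $(1-x)$, and the elementary estimate $|m\log(1+u)-mu|\le m u^2$ for $u=(c/N)(e^\alpha-1)$ small closes the argument uniformly. Beyond this careful bookkeeping, no deeper obstacle is expected.
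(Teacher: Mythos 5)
Your argument is correct and shares the paper's central computation: after linearizing the exponent, both proofs reduce $H_N(f)(x)$ to the explicit Binomial moment generating function $\log\mathbb{E}\bigl[e^{\alpha(1+\zeta_{N,x})}\bigr]=\alpha+(N-Nx-1)\log\bigl(1+\tfrac{c}{N}(e^{\alpha}-1)\bigr)$, whose uniform convergence to $H(x,\alpha)$ is elementary, and both treat $x=1$ separately via absorption, $H_N(f)(1)=0=H(1,f'(1))$. Where you genuinely differ is the control of the Taylor remainder. The paper first assumes $f\in C^2(E)$, bounds the remainder by $\tfrac{M_f}{2}\tfrac{(1+\zeta_{N,x})^2}{N}$ with $M_f=\sup|f''|$, shows $\mathbb{E}\bigl[e^{f'(x)(1+\zeta_{N,x})}\bigl(e^{\pm\frac{M_f}{2}\frac{(1+\zeta_{N,x})^2}{N}}-1\bigr)\bigr]\to 0$, and then passes to $f\in C^1(E)$ by approximation and the triangle inequality; you instead work in $C^1$ directly, trading the quadratic bound for the modulus of continuity of $f'$, a sandwich on the event $\{(1+\zeta_{N,x})/N<\delta\}$, and a superexponential Chernoff estimate (tilt $\log N$) that beats the crude $e^{NC}$ bound on the complementary event. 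Your route costs some truncation bookkeeping — in the lower bound you should state explicitly that one subtracts the atypical contribution from $\mathbb{E}\bigl[e^{(1+\zeta_{N,x})(f'(x)-\epsilon)}\bigr]$ and uses that this expectation is uniformly bounded away from zero before taking logarithms — but it buys a self-contained proof for $C^1$ and avoids the density step, which in the paper's form really requires closeness of derivatives ($f_m\to f$ in $C^1$ together with a uniform-in-$N$ estimate of $|H_N(f)-H_N(f_m)|$ in terms of $\|f'-f_m'\|_\infty$), not merely $\sup_x|f_m(x)-f(x)|\to 0$, since $H_N$ amplifies sup-norm perturbations by $N$. One immaterial slip: the Chernoff bound gives $e^{-(N\delta-1)\log N+cN}$ rather than $e^{-N\delta\log N+cN}$, a factor of $N$ that changes nothing.
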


\begin{proof} 
Let us first consider the case where $f\in C^2(E)$. Let $x\in E^N$, $x\neq 1$, and $\zeta_{N,x}$ be the number of unexplored neighbors of the selected vertex, given that there are already $Nx$ explored vertices, then
\begin{gather*}
H^N(f)(x) =
\begin{cases}
\log \mathbb{E}\left[ \exp\left\{\frac{f\left(x+\frac{1}{N}+\frac{\zeta_{N,x}}{N}\right) -f(x)}{\frac{1}{N}}\right\} \right], & \mbox{ if } 0\leq x<1, \\
0, & \mbox{ if }  x=1.
\end{cases}
\end{gather*}
It is enough to prove that
\begin{gather*}
\underset{N\rightarrow \infty}{\lim} \underset{x\in E^N \setminus \{1\}}{\sup} \mathbb{E}\left[ \exp \left\{\frac{f\left(x+\frac{1}{N}+\frac{\zeta_{N,x}}{N}\right) -f(x)}{\frac{1}{N}}\right\}\right] -\mathbb{E} \left[e^{f'(x)\left(1+\zeta_{N,x}\right)}\right] = 0,
\end{gather*}
and this is verified since both $\mathbb{E}\left[ e^{f'(x)(\zeta_{N,x}+1)} \left(e^{ \pm \frac{M_f}{2} \frac{(\zeta_{N,x}+1)^2}{N}}-1 \right) \right]$ converge to zero, being $M_f= \underset{\theta\in[0,1]}{\sup} \left\vert f''(\theta) \right\vert < \infty$. If $x=1$, then $H^N(f)(1)=\mathbf{H}(f)(1)=0$ for all $N$. The result can be extended for  $f\in C^1(E)$ by taking a sequence $\left\{f_m\right\}_m \subset C^2(E)$ such that $\underset{m\rightarrow \infty}{\lim} \underset{x\in E}{\sup} \left|f_m(x)-f(x)\right|=0$ and the triangular inequality.
\end{proof}

\subsection{Step 3: Comparison principle}
As mentioned before, the verification that for all $\beta>0$ and $h\in C(E)$ there exists a solution $f\in C^1(E)$ for the equation $f(x)-\beta H\left(x, f'(x)\right)-h(x)=0$ is difficult or imposible. An alternative is to prove the existence (and uniqueness) of viscosity solutions. Moreover, due to Theorem {\bf 6.14} of \cite{F&K}, it is enough to prove that the \emph{comparison principle} is verified for this Hamilton-Jacobi equation. The ideas to prove it were taken from \cite{Kraaij}, Chapter 9 of \cite{F&K} and \cite{CIL}.
\bigskip

Let $\beta>0$, $h\in C(E)$ and $F_{\beta,h}: E\times \mathbb{R}^2\rightarrow \mathbb{R}$ such that $F_{\beta,h}(x, \epsilon, p)= \epsilon-\beta H(x,p)-h(x)$. Consider the following Hamilton-Jacobi equation:
\begin{equation}\label{eq:HJ}
F_{\beta, h}(x, f(x), f'(x))=0 \, \ \forall x\in E. 
\end{equation}

\begin{definition}\label{comparison_principle} 
The function $\mu \rightarrow \R$ is a (viscosity) subsolution [\emph{supersolution}] of Equation (\ref{eq:HJ}) if it is bounded, upper [\emph{lower}] semi-continuous (u.s.c.) [\emph{l.s.c}] and  for all $\phi\in C^1(E)$ and $x_0\in E$ such that $\mu-\phi$ has a maximum [\emph{minimum}] at $x_0$, we have $F_{\beta,h}\left(x_0, \mu(x_0), \phi'(x_0)\right)\leq 0$ [$\geq 0$]. Equation (\ref{eq:HJ}) verifies the \emph{comparison principle} if for any subsolution $\mu$ and supersolution $v$, it is verified that $\mu \leq v$.
\end{definition}

If the comparison principle is verified, then if there is a viscosity solution (both sub and supersolution), it is unique. In Chapter 9 of \cite{F&K} algorithms are suggested for constructing sequences $x_{\alpha}$, $y_{\alpha}$ (with $\alpha\rightarrow +\infty$) such that $(x_{\alpha}, y_{\alpha})\rightarrow (z,z)$ and $z$ verifies $\mu(z)-v(z)=\underset{x \in E}{\sup} \left\{ \mu(x)- v(x)\right\}$. 

\begin{proposition}\label{prop:step3}
For each $\beta>0$ and $h\in C(E)$ the comparison principle is satisfied for Equation \ref{eq:HJ} with $f\in C^1(E)=Dom(\textbf{H})$.
\label{prop:step3}
\end{proposition}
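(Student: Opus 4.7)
The plan is to prove the comparison principle for \eqref{eq:HJ} via the classical doubling-of-variables technique, as in \cite{CIL} and Chapter~9 of \cite{F&K}. Fix a subsolution $u$ and a supersolution $v$, set $M := \sup_{x \in E}(u(x) - v(x))$, and suppose for contradiction that $M > 0$. I would introduce the penalized function
$$
\Phi_\alpha(x, y) := u(x) - v(y) - \tfrac{\alpha}{2}(x - y)^2 \quad \text{on } E \times E,
$$
which, being upper semi-continuous on the compact set $E \times E$, attains its maximum at some $(x_\alpha, y_\alpha)$. A standard penalization lemma (e.g.\ Lemma~9.2 of \cite{F&K}) then yields, along a subsequence, $\alpha(x_\alpha - y_\alpha)^2 \to 0$ and $(x_\alpha, y_\alpha) \to (\hat{x}, \hat{x})$ with $u(\hat{x}) - v(\hat{x}) = M$.

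Next I would exploit the partial optimality in each variable: $x \mapsto \Phi_\alpha(x, y_\alpha)$ is maximized at $x_\alpha$, so the subsolution property applied with the $C^1$ test function $\phi(x) = \tfrac{\alpha}{2}(x - y_\alpha)^2$ gives $u(x_\alpha) \leq h(x_\alpha) + \beta H(x_\alpha, p_\alpha)$ with $p_\alpha := \alpha(x_\alpha - y_\alpha)$. Symmetrically, the supersolution property yields $v(y_\alpha) \geq h(y_\alpha) + \beta H(y_\alpha, p_\alpha)$. The structural fact that closes the argument in the interior is the very clean algebra of our Hamiltonian on $[0,1) \times \R$,
$$
H(x, p) - H(y, p) = c(y - x)(e^p - 1),
$$
so that for $x_\alpha, y_\alpha < 1$,
$$
u(x_\alpha) - v(y_\alpha) \leq h(x_\alpha) - h(y_\alpha) - \beta c (x_\alpha - y_\alpha) \bigl(e^{\alpha(x_\alpha - y_\alpha)} - 1\bigr).
$$
The last term is nonpositive since $t(e^t - 1) \geq 0$ for every $t \in \R$, so letting $\alpha \to \infty$ along the subsequence and using continuity of $h$ together with $x_\alpha - y_\alpha \to 0$ forces $M \leq 0$, the desired contradiction.

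The hard part will be handling $\hat{x} = 1$, where $H$ is defined separately as zero and classical viscosity arguments need care. The situation splits cleanly into subcases. If $x_\alpha = y_\alpha = 1$ eventually, the viscosity inequalities collapse to $u(1) \leq h(1) \leq v(1)$, giving $M \leq 0$ directly. If, say, $y_\alpha = 1$ but $x_\alpha < 1$, then $p_\alpha = \alpha(x_\alpha - 1) < 0$, so $H(x_\alpha, p_\alpha) = p_\alpha + c(1 - x_\alpha)(e^{p_\alpha} - 1) < 0$, and the subsolution inequality bounds $u(x_\alpha)$ from above by $h(x_\alpha)$ plus a negative term; combined with $v(y_\alpha) = v(1) \geq h(1)$, this yields $u(x_\alpha) - v(1) \leq h(x_\alpha) - h(1) + o(1) \to 0$, again giving $M \leq 0$. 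The symmetric configuration is analogous. These boundary subcases, while elementary in outcome, are where the non-standard definition of $H$ at $x = 1$ enters the proof and constitute the only genuinely delicate portion of the argument; the favorable sign structure of $H$ near the boundary, dictated by the direction in which the doubling variable $p_\alpha$ points, is precisely what allows the interior argument to propagate up to $x = 1$.
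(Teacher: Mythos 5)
Your proof is correct, and at its core it is the same doubling-of-variables argument with quadratic penalization that the paper uses: the same function $\Phi_\alpha$, the same algebraic identity $H(x,p)-H(y,p)=c(y-x)(e^p-1)$ for $x,y<1$, and the same hand-made case analysis at the exceptional point $x=1$ where $H$ is defined to vanish. The genuine difference is in how the limit $\alpha\to\infty$ is closed. The paper outsources the reduction to Proposition 4.2 of Kraaij (it suffices to show $\liminf_\alpha\bigl[H(x_\alpha,\alpha(x_\alpha-y_\alpha))-H(y_\alpha,\alpha(x_\alpha-y_\alpha))\bigr]\le 0$) and then, via Lemma 4.5 of Kraaij, proves that the momenta $\alpha(x_\alpha-y_\alpha)$ stay bounded, extracts a convergent subsequence with limit $A$, and concludes from $H(z,A)-H(z,A)=0$. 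You instead subtract the two viscosity inequalities directly and observe that the Hamiltonian difference is \emph{pointwise} nonpositive because $t(e^t-1)\ge 0$, so you never need boundedness of $\alpha(x_\alpha-y_\alpha)$ nor any subsequence of momenta; continuity of $h$ and $x_\alpha-y_\alpha\to 0$ finish the argument. This is more self-contained (only the standard penalization lemma from \cite{CIL} or Chapter 9 of \cite{F&K} is invoked) and in fact the same sign observation would let the paper skip its momentum-boundedness step, since the quantity whose $\liminf$ must be controlled is already nonpositive for every $\alpha$ with $x_\alpha,y_\alpha<1$. The price of your route is that you must carry the sub/supersolution inequalities explicitly through the boundary subcases, which you do correctly: the sign of $p_\alpha$ in each mixed configuration makes the corresponding $H$-term help rather than hurt, exactly as you note.
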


\begin{proof}
Let $\mu$ be a subsolution and $v$ a supersolution of Equation (\ref{eq:HJ}).\\
Let $\psi:\left[0,1\right]^2 \rightarrow \mathbb{R}^+$ such that $\psi(x,y)= \frac{1}{2}\left(x-y\right)^2$ and let $x_{\alpha}, y_{\alpha}\in E$ such that
\begin{gather*}
\mu(x_{\alpha})-v(y_{\alpha})-\alpha \psi(x_{\alpha}, y_{\alpha}) = \sup_{x,y \in E} \left\{ \mu(x)-v(y)-\alpha \psi(x,y) \right\}.
\end{gather*}
As consequence of Proposition {\bf 4.2} in \cite{Kraaij} it is enough to prove that the following inequality holds:
\begin{gather*} 
\liminf_{\alpha\rightarrow \infty} H\left( x_{\alpha}, \alpha \psi_x\left(x_{\alpha}, y_{\alpha}\right)\right) -  H\left( y_{\alpha}, \alpha \psi_x\left(x_{\alpha}, y_{\alpha}\right)\right) \leq 0,
\end{gather*}
where $\psi_x$ is the derivative of $\psi$ w.r.t. $x$. If $z\in \left[0, 1\right)$, then
\begin{gather*}
H\left( x_{\alpha}, \alpha \psi_x\left(x_{\alpha}, y_{\alpha}\right)\right) -  H\left( y_{\alpha}, \alpha \psi_x\left(x_{\alpha}, y_{\alpha}\right)\right) = -c \left(e^{\alpha\left(x_{\alpha}-y_{\alpha}\right)}-1\right)\left(x_{\alpha}-y_{\alpha}\right).
\end{gather*}
By Proposition {\bf 3.7} in \cite{CIL}, we know that $x_{\alpha}-y_{\alpha}\rightarrow 0$ and due to Lemma {4.5} in \cite{Kraaij} we have:
\begin{gather*}
\sup_{\alpha} H\left( y_{\alpha}, \alpha \left(x_{\alpha}-y_{\alpha}\right)\right) = \sup_{\alpha} 
\alpha \left(x_{\alpha}-y_{\alpha}\right) + c\left(1-y_{\alpha}\right) \left(e^{\alpha \left(x_{\alpha}-y_{\alpha}\right) }-1\right) <\infty,
\end{gather*}
and this implies that $\underset{\alpha} {\sup} \, \alpha \left(x_{\alpha}-y_{\alpha}\right) <\infty$. Then $\left\{\alpha \left(x_{\alpha}-y_{\alpha}\right)\right\}_{\alpha}$ has a convergent subsequence. Let $A$ be its limit. Then, 
\begin{gather*}
\liminf_{\alpha\rightarrow \infty} H\left( x_{\alpha}, \alpha \psi_x\left(x_{\alpha}, y_{\alpha}\right)\right) -  H\left( y_{\alpha}, \alpha \psi_x\left(x_{\alpha}, y_{\alpha}\right)\right)
\leq H\left(z, A\right) -  H\left(z, A\right) =0.
\end{gather*}
For $z=1$, we repeat the previous analysis, being careful with cases in which $x_{\alpha}=1$ or $y_{\alpha}=1$ after a certain $\alpha_0$.
\end{proof}

\subsection{Step 4: Variational representation of the rate function} \label{step4}

In this section, we formally define the Nisio semigroup $\mathcal{V}_t$ that was mentioned in Theorem \ref{thm:variational_version}. By Theorem \ref{thm:variational_version}, it is enough to prove that Conditions {\bf 8.9}, {\bf 8.10}, and {\bf 8.11} from \cite{F&K} are verified in our case. We present them as propositions. Als, the role of absolutely continuous functions in the definition of the rate function $I$ is explained.

\begin{definition}[Control set of a linear operator and Nisio semigroup] Let $U$ and $E$ be complete and separable metric spaces. Let $A: Dom(A)\subset B(E) \rightarrow M\left(E\times U\right)$ be a single valued linear operator. Let $\mathcal{M}_m (U)$ be the space of Borel measures $\lambda$ on $U\times [0,1]$ satisfying $\lambda\left(U\times [0,t]\right) = t$ for all $t\in [0,1]$. The measure $\lambda$ is known as a \emph{relaxed control}. We say that the pair $\left(\mathbf{x}, \lambda\right) \in D_{E}[0,1] \times \mathcal{M}_m(U)$ satisfies the \emph{ relaxed control equation } for $A$ if and only if:
\begin{enumerate}
\item $\iint_{U\times [0,t]} \left\vert A(f)(\mathbf{x}(s), u)\right\vert \lambda\left(\text{d}u\times \text{d}s\right) <\infty \; \forall f\in Dom(A), \; \forall t\in[0,1]$;\\
\item $f\left(\mathbf{x}(t)\right)-f\left(\mathbf{x}(0)\right)=\iint_{U\times [0,t]}  A(f)(\mathbf{x}(s), u) \lambda\left(\text{d}u\times \text{d}s\right) \; \forall f\in Dom(A), \; \forall t \in [0,1]$.
\end{enumerate} 
We denote the collection of pairs satisfying the above properties by $\mathcal{Y}$. The Nisio semigroup corresponding to the control problem determined by the linear operator $A$ and the cost function $-L$ is:
\begin{gather}\label{eq:Vt}
\mathcal{V}_t (f)(x_0)= \sup_{\{\left(\mathbf{x},\lambda \right) \in \mathcal{Y}: \; \mathbf{x}(0)=x_0\}} 
\left\{f(\mathbf{x}(t)) - \iint_{U\times [0,t]}  L(\mathbf{x}(s), u) \lambda\left(\text{d}u\times \text{d}s\right) \right\}
\end{gather}
for each $x_0 \in E$ (the supremum of an empty set is defined to be $-\infty$). Note that operator $A$ appears in the definition of the control set.
\end{definition}

In our case, as $\mathbf{H}(f)(x)= H \left(x, f'(x)\right)$ for each $x\in E= \left[0,1\right]$ and $H \leftrightarrow L$, we have that $\mathbf{H}$ can be written as 
$
\mathbf{H}(f)(x)= \underset{u \in U}{\sup} \left\{ A(f)(x,u)-L(x,u) \right\}, 
$
where $U= \mathbb{R}$ and $A:C^1(E)\rightarrow M\left(E\times U\right)$ is the linear operator $A(f)(x,u)=f'(x)u$. As $L$ is convex w.r.t. $\beta$, it follows that a deterministic control $\lambda\left(\text{d}u\times \text{d}s\right)=\delta_{u(s)}(\text{d}u)\text{d}s$ is allways the control with smallest cost by Jensen's inequality. Moreover, if $\mathbf{x}: E\rightarrow \mathbb{R}$ is an absolutely continuous function (we note $\mathbf{x}\in \mathcal{AC}$ for short), then
\begin{gather*}
f\left(\mathbf{x}(t)\right)-f\left(\mathbf{x}(0)\right)= \intop_{0}^{t}f'\left(\mathbf{x}(s)\right) \dot{\mathbf{x}}(s)\text{d}s= \iint_{\mathbb{R}\times [0,t]} f'\left(\mathbf{x}(s)\right) \, u \lambda(\text{d}u \times \text{d}s),
\end{gather*}
if we define  $\lambda= \lambda(\mathbf{x})$ such that $\lambda(\text{d}u \times \text{d}s)= \delta_{\dot{\mathbf{x}}(s)}(\text{d}u) \text{d}s$. Then, the supremum in Equation \ref{eq:Vt} is reached on $\left\{ (\mathbf{x}, \lambda): \, \mathbf{x} \in \mathcal{AC}, \; \mathbf{x}(0)=x_0 \right\} \subset \mathcal{Y}$.
\begin{proposition} Conditions {\bf 8.9} of \cite{F&K} are verified. 
\end{proposition}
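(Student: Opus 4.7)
The plan is to verify, in our setting, the ingredients required by Condition \textbf{8.9} of \cite{F&K}, namely: (i) $A\colon Dom(A)\subset C_b(E)\to M(E\times U)$ is a linear operator whose domain is an algebra containing the constants and separating points of $E$; (ii) $L\colon E\times U\to[0,+\infty]$ is lower semicontinuous; and (iii) for every $f\in Dom(A)$ one has the variational identity
\begin{equation*}
\mathbf{H}(f)(x)=\sup_{u\in U}\bigl\{A(f)(x,u)-L(x,u)\bigr\},\qquad x\in E.
\end{equation*}
With $E=[0,1]$, $U=\R$, $Dom(A)=C^1(E)$ and $A(f)(x,u)=f'(x)u$, the three points above are the structural requirements we need to check, and each one is essentially a direct consequence of objects already introduced.

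First I would deal with the operator-theoretic part. The map $A(f)(x,u)=f'(x)u$ is manifestly linear in $f$, and $C^1([0,1])$ is a subalgebra of $C_b([0,1])$ that contains the constants and separates points (e.g.\ via $f(x)=x$). On the constants, $A(\mathbf 1)(x,u)=0$, and for every $f\in C^1(E)$ the function $(x,u)\mapsto f'(x)u$ is continuous on $E\times U$, hence belongs to $M(E\times U)$. Any additional measurability/continuity requirements imposed by Condition \textbf{8.9} on $A$ are handled in the same way by continuity of $f'$.

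Second, I would verify the properties of the cost function $L$. By \eqref{eq_Lcost_Y}, $L$ coincides with the Poisson-type rate function $\Lambda^*_{c(1-x)}(\beta-1)$ on the open region $\{x<1,\,\beta>1\}$, with $c(1-x)$ on the boundary segment $\{x<1,\,\beta=1\}$, with $0$ at the single point $(1,0)$, and with $+\infty$ elsewhere; in particular $L\ge 0$. Lower semicontinuity inside the open region is immediate from smoothness; along the ray $\beta\downarrow 1$ one has $(\beta-1)[\log((\beta-1)/c(1-x))-1]\to 0$, so $L(x,\beta)\to c(1-x)=L(x,1)$, matching the prescribed value. The only delicate boundary is $x\uparrow 1$: for fixed $\beta\ne 0$, $L(x,\beta)\to+\infty$ (the entropy term blows up or $L=+\infty$ a priori), so the lower envelope at $(1,\beta)$ with $\beta\ne 0$ is $+\infty$, which is what the piecewise definition gives. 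At $(1,0)$, approaching with $x_n\uparrow 1$, $\beta_n\to 0$ one has $\beta_n<1$ eventually, forcing $L(x_n,\beta_n)=+\infty$ unless $\beta_n=1$, so $\liminf L(x_n,\beta_n)\ge 0=L(1,0)$. This gives lower semicontinuity on all of $E\times\R$.

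Third, the variational identity is precisely the Legendre duality already recorded in the remark following Theorem \ref{LDP_for_Y}: for $x<1$ the function $\alpha\mapsto H(x,\alpha)=\alpha+c(1-x)(e^\alpha-1)$ is smooth, strictly convex, superlinear, and we defined $L(x,\cdot)$ to be its Legendre transform, so
\begin{equation*}
\sup_{u\in\R}\bigl\{A(f)(x,u)-L(x,u)\bigr\}=\sup_{u\in\R}\bigl\{f'(x)u-L(x,u)\bigr\}=H(x,f'(x))=\mathbf{H}(f)(x).
\end{equation*}
At $x=1$, $L(1,u)=+\infty$ for $u\ne 0$ and $L(1,0)=0$, so the supremum equals $f'(1)\cdot 0-0=0=\mathbf{H}(f)(1)$. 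The one point that deserves care, and is the main obstacle, is the degenerate boundary $x=1$ where $H(x,\alpha)$ is set to $0$ for all $\alpha$ while on $x<1$ it is strictly convex in $\alpha$; this discontinuity is absorbed by extending $L$ to $+\infty$ off the point $(1,0)$, but one must check that the resulting extension is still jointly lower semicontinuous (done above) and that no continuity hypothesis of Condition \textbf{8.9} on $A(f)$ is violated, which is clear since $A(f)(x,u)=f'(x)u$ is jointly continuous regardless of the behavior of $H$ at $x=1$. Once these pieces are assembled, Condition \textbf{8.9} is verified.
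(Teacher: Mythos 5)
There is a genuine gap: you verify only the ``structural'' parts of Condition \textbf{8.9} of \cite{F&K} (linearity and continuity of $A$, the domain being a separating algebra, lower semicontinuity of $L$, and the Legendre-duality identity $\mathbf{H}(f)(x)=\sup_u\{A(f)(x,u)-L(x,u)\}$), which are exactly the parts the paper dismisses as trivially verified. Condition \textbf{8.9} has a fifth requirement, and it is the only substantive one here: for each $f\in Dom(A)$ there must exist a nondecreasing (right-continuous) function $\Psi_f:[0,\infty)\to[0,\infty)$ with $\lim_{r\to\infty}\Psi_f(r)/r=0$ such that $\left| A(f)(x,u)\right|=\left|f'(x)u\right|\le \Psi_f\left(L(x,u)\right)$ for all admissible $(x,u)$; relatedly, one needs (relative) compactness of the level sets $\{(x,u): x\in K,\ L(x,u)\le M\}$. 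Your proposal never addresses this superlinear-domination condition, whereas the paper's proof consists precisely of constructing such a $\Psi_f$ following Lemma \textbf{10.21} of \cite{F&K}. The condition does hold in this model, but it requires an argument: since $1-x\le 1$, one has the uniform bound $L(x,\beta)\ge(\beta-1)\left[\log\left(\frac{\beta-1}{c}\right)-1\right]$ for $x<1$, $\beta>1$, so $L$ grows like $\beta\log\beta$ uniformly in $x$ and $|u|$ can be dominated by a function of $L$ growing like $r/\log r$; without some such uniform estimate (note $c(1-x)\to 0$ as $x\to1$, so uniformity in $x$ is the point to check) the claim ``Condition 8.9 is verified'' is not established.

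A secondary flaw: your lower-semicontinuity argument is wrong at the point $(1,1)$. You assert that for fixed $\beta\neq0$ one has $L(x,\beta)\to+\infty$ as $x\uparrow1$, but for $\beta=1$ the definition \eqref{eq_Lcost_Y} gives $L(x,1)=c(1-x)\to 0$, while $L(1,1)=+\infty$; so $L$ as written is not lower semicontinuous at $(1,1)$ (and the corresponding level sets are not closed there, only bounded). This boundary point has to be handled explicitly, e.g.\ by working with the lower semicontinuous envelope of $L$ or by restricting the admissible set $\Gamma$ at $x=1$, rather than being swept into the ``$\beta\neq0$'' case.
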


\begin{proof}
Conditions (1) to (4) are trivially verified. For (5) we construct $\Psi_f$ as in Lemma {\bf 10.21} of \cite{F&K}.
\end{proof}
\begin{proposition}
Condition \textbf{8.10} from \cite{F&K} is verified, i.e. for all $x_0 \in E$ there exists $(\mathbf{x}, \lambda ) \in \mathcal{Y}$ such that $\mathbf{x}(0)=x_0$ and
\begin{gather*}
\iint_{U \times [0,1]} L\left( \mathbf{x}(s), u\right) \lambda \left(\text{d}u \times \text{d}s \right)= 0.
\end{gather*}
\end{proposition}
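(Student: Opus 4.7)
The plan is to exhibit an explicit zero-cost trajectory starting from each $x_0 \in E$ and take for $\lambda$ the Dirac measure concentrated along its velocity. Recalling the remark following Theorem \ref{LDP_for_Y}, the cost function $L$ vanishes precisely on the set $\{(x,\beta): x<1,\, \beta = 1+c(1-x)\} \cup \{(1,0)\}$, which is exactly the graph of the vector field driving the fluid limit. So the natural candidate is to let $\mathbf{x}(t)$ be the solution of $\dot{\mathbf{x}} = 1 + c(1-\mathbf{x})$ with $\mathbf{x}(0)=x_0$ until it hits $1$, and then stay at $1$; equivalently, $\mathbf{x}(t) = \hat{z}_{x_0}(t)$, the natural ``shifted fluid limit'' capped at $1$.

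First I would verify that this $\mathbf{x}$ is absolutely continuous on $[0,1]$ (which is immediate since it is $C^1$ before the hitting time $T^*(x_0) = \frac{1}{c}\log\!\bigl(\tfrac{1+c}{1+c-cx_0}\bigr)$ and constant afterwards), and that its derivative is uniformly bounded. Then I would define the relaxed control $\lambda(\text{d}u\times\text{d}s) = \delta_{\dot{\mathbf{x}}(s)}(\text{d}u)\,\text{d}s$, which lies in $\mathcal{M}_m(U)$ by construction.

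Next I would check that $(\mathbf{x},\lambda) \in \mathcal{Y}$. Condition (1) (integrability) is trivial: for $f\in C^1(E)$ we have $|A(f)(\mathbf{x}(s),u)| = |f'(\mathbf{x}(s))\,u|$, and under $\lambda$ this is $|f'(\mathbf{x}(s))\dot{\mathbf{x}}(s)|$ which is bounded on $[0,1]$. Condition (2) (the relaxed control equation) follows directly from the fundamental theorem of calculus for absolutely continuous functions, precisely as noted in the discussion immediately preceding the proposition: $f(\mathbf{x}(t)) - f(\mathbf{x}(0)) = \int_0^t f'(\mathbf{x}(s))\dot{\mathbf{x}}(s)\,\text{d}s = \iint A(f)(\mathbf{x}(s),u)\,\lambda(\text{d}u\times\text{d}s)$.

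Finally I would evaluate the cost. By construction, on $[0,T^*(x_0))$ we have $\mathbf{x}(s)<1$ and $\dot{\mathbf{x}}(s) = 1 + c(1-\mathbf{x}(s))$, so $L(\mathbf{x}(s),\dot{\mathbf{x}}(s)) = 0$ by direct substitution in \eqref{eq_Lcost_Y}; on $[T^*(x_0),1]$ we have $\mathbf{x}(s) = 1$ and $\dot{\mathbf{x}}(s)=0$, so again $L(\mathbf{x}(s),\dot{\mathbf{x}}(s)) = 0$. Hence $\iint_{U\times[0,1]} L(\mathbf{x}(s),u)\,\lambda(\text{d}u\times\text{d}s) = \int_0^1 L(\mathbf{x}(s),\dot{\mathbf{x}}(s))\,\text{d}s = 0$, which concludes the verification. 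No step is truly an obstacle here; the only minor care is at the boundary point $x_0 = 1$ (take $\mathbf{x}\equiv 1$) and at the junction $s=T^*(x_0)$, where $\dot{\mathbf{x}}$ has a jump from $1$ to $0$ but $\mathbf{x}$ remains absolutely continuous and the integrand stays zero.
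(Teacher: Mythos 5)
Your proposal is correct and follows essentially the same route as the paper: both take the time-shifted fluid limit from $x_0$ capped at $1$ and the Dirac relaxed control concentrated on its velocity (the paper writes it as $\delta_{\{q(\mathbf{x}(s))\}}(\text{d}u)\,\text{d}s$ with $q(x)=H_{\alpha}(x,0)$, which coincides with your $\delta_{\dot{\mathbf{x}}(s)}$), so the cost integrand vanishes identically. The only blemish is your explicit hitting-time formula: $\frac{1}{c}\log\bigl(\frac{1+c}{1+c-cx_0}\bigr)$ is the time at which the fluid limit started at $0$ reaches $x_0$, whereas the hitting time of $1$ from $x_0$ is $\frac{1}{c}\log\left(1+c-cx_0\right)$; this value plays no role in the argument, so the proof stands.
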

\begin{proof}
Since $L(x,u)=0 \Leftrightarrow u= H_{\alpha}(x,0)$,  the function  $q(x)=H_{\alpha}(x,0)= 1+c(1-x)$ solves the equation $L(x, q(x))=0$ for all $x\in E$.  Note that the fluid limit verifies $\dot{z}= q(z)$ with the initial condition $z(0)=0$. Given $x_0 \in E$, there exists $t_0 \in [0,1]$ such that $z(t_0)= x_0$, then $z\left(t+t_0 \right)$ is the solution of $\dot{x}= q(x)$ with $x(0)=x_0$. Define $\mathbf{x}(t)= z(t+t_0)\wedge 1$ for all $t\in[0,1]$ and $\lambda$ such that $\lambda\left(\text{d}u \times \text{d}s\right)= \delta_{\left\{q(\mathbf{x}(s))\right\}}(\text{d}u)\times \text{d}s$, then $\left(\mathbf{x}, \lambda \right)\in \mathcal{Y}$ and verifies the required condition. 
\end{proof}
\begin{proposition}
Condition {\bf 8.11} from \cite{F&K} is verified, i.e. for all $x_0 \in E$ and $f\in C^1(E)$ there exists $(\mathbf{x}, \lambda )=(\mathbf{x}_f, \lambda_f ) \in \mathcal{Y}$ such that $\mathbf{x}(0)=x_0$ and
\begin{gather*}
\intop_{t_1}^{t_2} \mathbf{H}(f)\left(\mathbf{x}(s)\right)\text{d}s \leq \iint_{U\times [t_1, t_2]} \left[ A(f)\left(\mathbf{x}(s), u \right) - L\left(\mathbf{x}(s),u \right) \right] \lambda\left(\text{d}s\times \text{d}u \right),
\end{gather*}
for all $0\leq t_1\leq t_2\leq 1$.
\end{proposition}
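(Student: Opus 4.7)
The plan is to exhibit an explicit pair $(\mathbf{x}_f, \lambda_f)$ that attains the pointwise supremum defining $\mathbf{H}(f)$, so that the required inequality in fact holds with equality. Recalling that $\mathbf{H}(f)(x) = H(x, f'(x))$ and that $H \leftrightarrow L$ via Legendre duality, for $x<1$ the supremum $\sup_{\beta\in\R}\{f'(x)\beta - L(x,\beta)\}$ is attained at the unique critical point
\[
u^{*}(x) := 1 + c(1-x)\,e^{f'(x)},
\]
as is immediate from differentiating the formula for $L$ in \eqref{eq_Lcost_Y}. A direct computation then gives the pointwise identity $A(f)(x,u^{*}(x)) - L(x,u^{*}(x)) = H(x,f'(x)) = \mathbf{H}(f)(x)$ for every $x\in[0,1)$. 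At the boundary $x=1$ we set $u^{*}(1)=0$; since $\mathbf{H}(f)(1)=0$, $L(1,0)=0$ and $A(f)(1,0)=0$, the identity extends.

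Next, I would construct the trajectory $\mathbf{x}_f$ as a solution of the ODE $\dot{\mathbf{x}} = u^{*}(\mathbf{x})$ with $\mathbf{x}(0)=x_0$. Since $f\in C^{1}(E)$, the function $u^{*}$ is continuous and bounded on $[0,1)$, so Peano's theorem provides at least one solution on a maximal interval. Because $u^{*}\ge 1$ on $[0,1)$, the trajectory is strictly increasing with slope at least $1$, hence it hits the state $1$ at some time $t^{*} \le 1 - x_0 \le 1$. I then freeze the trajectory by setting $\mathbf{x}_f(t) = 1$ for $t\in[t^{*},1]$, which is consistent with the choice $u^{*}(1)=0$. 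The resulting $\mathbf{x}_f$ is absolutely continuous on $[0,1]$. For the control I take the deterministic relaxed control $\lambda_f(\mathrm{d}u\times \mathrm{d}s) = \delta_{u^{*}(\mathbf{x}_f(s))}(\mathrm{d}u)\,\mathrm{d}s$, which clearly lies in $\mathcal{M}_m(U)$.

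To check that $(\mathbf{x}_f,\lambda_f)\in\mathcal{Y}$, I would verify the relaxed control equation for every $g\in Dom(A)=C^{1}(E)$: by absolute continuity of $\mathbf{x}_f$ and the chain rule,
\[
\iint_{U\times[0,t]} A(g)(\mathbf{x}_f(s),u)\,\lambda_f(\mathrm{d}u\times\mathrm{d}s) = \int_0^t g'(\mathbf{x}_f(s))\,\dot{\mathbf{x}}_f(s)\,\mathrm{d}s = g(\mathbf{x}_f(t))-g(\mathbf{x}_f(0)),
\]
and the integrability condition follows from the boundedness of $g'$ and of $u^{*}\circ \mathbf{x}_f$ on $[0,1]$.

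Finally, integrating the pointwise identity $\mathbf{H}(f)(\mathbf{x}_f(s)) = A(f)(\mathbf{x}_f(s),u^{*}(\mathbf{x}_f(s))) - L(\mathbf{x}_f(s),u^{*}(\mathbf{x}_f(s)))$ against $\mathrm{d}s$ on $[t_1,t_2]$ and rewriting the right-hand side as a double integral against $\lambda_f$ yields the desired inequality (in fact equality). The main delicate point I expect is the boundary behaviour at $x=1$: $u^{*}$ has a jump from $1$ to $0$ there, and Peano only guarantees existence (not uniqueness) of the solution, so care is needed to paste the pre-$t^{*}$ solution with the constant trajectory on $[t^{*},1]$ in a measurable way and to handle separately the degenerate case $x_0=1$, where one simply takes $\mathbf{x}_f\equiv 1$ and $\lambda_f = \delta_0\otimes\mathrm{d}s$.
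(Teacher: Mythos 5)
Your proposal is correct and follows essentially the same route as the paper: you take the optimizer $u^{*}(x)=1+c(1-x)e^{f'(x)}$ of the Legendre supremum (the paper's $q_f(x)=H_{\alpha}(x,f'(x))$), use the resulting pointwise equality $A(f)(x,u^{*}(x))-L(x,u^{*}(x))=\mathbf{H}(f)(x)$, build $\mathbf{x}_f$ by Peano's theorem for $\dot{\mathbf{x}}=u^{*}(\mathbf{x})$ frozen at $1$, and take the deterministic Dirac control, exactly as in the paper's argument (including the separate case $x_0=1$).
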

\begin{proof}
Let $x_0 \in E$ and $f\in C^1(E)$ be fixed. Since $\mathbf{H}(f)(x)\geq A(f)(x,u)-L(x,u)$ for all $(x,u)\in E\times U$, we need to find $(\mathbf{x}, \lambda ) \in \mathcal{Y}$ such that $\mathbf{x}(0)=x_0$ and 
\begin{equation}\label{A}
\intop_{0}^{t}
 H\left(\mathbf{x}(s), f'\left(\mathbf{x}(s)\right) \right)\text{d}s = \iint_{U\times [0,t]} \left[ f'\left(\mathbf{x}(s)\right)u - L\left(\mathbf{x}(s),u \right) \right] \lambda\left(\text{d}s\times \text{d}u \right),
\end{equation}
for all $t\in [0,1]$. If we define $q_f(x)=H_{\alpha}(x, f'(x))$, then $H(x, f'(x))=f'(x)q_f(x)-L\left(x, q_f(x)\right)$ and Equation \ref{A} is verified if we take $\lambda\left(\text{d}u \times \text{d}s \right)= \delta_{\left\{q_f\left(\mathbf{x}(s)\right)\right\}}(\text{d}u) \text{ d}s$. Now we have to add conditions so that in addition $\left(\mathbf{x}, \lambda\right)\in \mathcal{Y}$. In particular, $\left(\mathbf{x}, \lambda\right)$ has to verify:
\begin{gather*}
\intop_{0}^{t}g'\left(\mathbf{x}(s)\right) q_f\left(\mathbf{x}(s)\right)\text{d}s =  g\left(\mathbf{x}(t)\right)-g\left(\mathbf{x}(0)\right) \; \forall t\in [0,1], \; \forall g\in C^1(E).
\end{gather*}
Then we look for a path that solves the following problem:
\begin{equation} \label{eq:B}
\begin{cases}
\mathbf{x} \text{ is differentiable almost everywhere and } \dot{\mathbf{x}}(t)=q_f\left(\mathbf{x}(t)\right),\\
\mathbf{x}(0)=x_0,\\
\mathbf{x}(t) \in [0,1]  \text{ for all } t\geq 0.
\end{cases}
\end{equation}
Let $x_0 \in [0,1)$. Note that $q_f(x) = 1+c(1-x)e^{f'(x)}>1$ is continuous, then from Peano's theorem (see \cite{Crandall72}) we know that the ODE $\begin{cases} \dot{\mathbf{x}}(t)=q_f\left(\mathbf{x}(t)\right)\\ \mathbf{x}(t_0)=x_0 \end{cases}$ has a local solution $\mathbf{x}:J \rightarrow \R$, being $J$ an open neighborhood of $t_0$, it is also increasing and verifies $\mathbf{x}(t)\geq t$ for all $t\in [0,1]$. Since we need $\mathbf{x}\in D_E[0,1]$, we can paste these local solutions until the time $T_{x_0}$ it reaches $1$ and define $\mathbf{x}(t)=1$ for $ T_{x_0} \leq t \leq 1$. If $x_0=1,$ we take $\mathbf{x} \equiv 1$.
\end{proof}

\section*{Acknowledgements}
The authors would like to thank the referees for their valuable comments that help us to improve our manuscript significantly.
\bibliographystyle{alea3}

\bibliography{References}

\end{document}